\documentclass[11pt,a4paper]{article}
\usepackage[margin=27mm]{geometry}
\usepackage[T1]{fontenc}
\usepackage{lmodern,microtype}
\usepackage{amsmath,amssymb,amsthm,array,needspace,graphicx,booktabs,float}
\usepackage[dvipsnames]{xcolor}
\usepackage[colorlinks=true,hyperfootnotes=false,linkcolor=MidnightBlue,citecolor=MidnightBlue,urlcolor=MidnightBlue]{hyperref}
\usepackage{fancyhdr}
\newtheorem{theorem}{Theorem}
\newtheorem{lemma}{Lemma}
\newtheorem{proposition}{Proposition}
\newtheorem{corollary}{Corollary}
\theoremstyle{definition}
\newtheorem{definition}{Definition}
\numberwithin{equation}{section}
\DeclareMathOperator{\Area}{Area}
\DeclareMathOperator{\diam}{diam}
\DeclareMathOperator{\arcosh}{arcosh}
\newcommand{\HH}{\mathbb H^2}
\newcommand{\RR}{\mathbb R}
\newcommand{\ZZ}{\mathbb Z}

\title{Area and diameter gaps for hyperbolic monotiles}
\author{Yixi Liao and Erxiao Wang\\[4pt]
\small Zhejiang Normal University}
\date{}
\hypersetup{pdftitle={Area and diameter gaps for hyperbolic monotiles},
pdfauthor={Yixi Liao and Erxiao Wang}}
\makeatletter
\renewcommand{\@maketitle}{\begin{center}
{\Large\@title\par}\vskip0.8em
{\normalsize\begin{tabular}[t]{c}\@author\end{tabular}\par}
\end{center}\vskip0.6em}
\makeatother

\graphicspath{{./}}
\newcommand{\dclosed}{\delta_n^{\mathrm{cl}}}
\newcommand{\dplane}{d_n^{\mathrm{pl}}}
\newcommand{\anorm}{a/\pi}
\newcommand{\ang}[1]{[#1]}
\newcommand{\ballarea}{\mathcal A}
\newcommand{\vertexset}{\mathcal V}
\newcommand{\starcount}{v}
\newcommand{\halfcount}[1]{v_{\mathrm{half},#1}}
\begin{document}
\maketitle
\begingroup
\renewcommand{\thefootnote}{}
\footnotetext{Corresponding author: Erxiao Wang
(\href{mailto:wang.eric@zjnu.edu.cn}{wang.eric@zjnu.edu.cn}).
Research was supported by National Natural Science Foundation of China
NSFC-RGC 12361161603 and Key projects of Zhejiang Natural Science
Foundation No.\ LZ22A010003.}
\endgroup
\begin{abstract}
For each fixed $n$, we prove a positive lower bound for the diameter
of a compact simple geodesic $n$-gonal monotile of the hyperbolic
plane. We also prove a positive lower bound for the area of such a
tile in a finite monohedral tiling of a closed hyperbolic surface,
independent of the topology and metric. Both bounds become independent
of $n$ for vertex-proper tilings, in which every genuine tile vertex
belongs to at least three distinct tiles. This includes tilings by
convex polygons. Reflex angles and non-edge-to-edge incidences are
allowed. After qualitative proofs, we obtain explicit constants from
a sharp gap estimate for packing polytopes with arbitrarily coupled
nonnegative integer constraints. Exact corner balance applies on
closed surfaces; in the plane, covering duality and ball counts give
a boundary factor depending on the tile diameter. Following Zare,
we give, for each integer $q\ge2$, a geodesic construction with
$2q+3$ sides and diameter less than $3/q$, showing
that side counts cannot be unrestricted without an additional
condition. The closed-surface results extend to regular curved sides.
The geometric constants are effective but are not claimed to be optimal.
\par\medskip\noindent
\textit{Keywords:} hyperbolic monotile; area gap; diameter gap; 
linear programming with integer coefficients; integer gap; vertex-proper.
\par\smallskip\noindent
\textit{2020 Mathematics Subject Classification:}
Primary 52C20; Secondary 05B45, 51M10.
\end{abstract}

\section{Introduction}

How small can a polygonal monotile of the hyperbolic plane be?
We prove diameter gaps for plane monotiles and area gaps for finite
tilings of closed hyperbolic surfaces. A fixed number of geodesic
sides suffices for both conclusions. A condition on the number of
tiles meeting at vertices makes the bounds independent of the number
of sides, and includes all convex geodesic monotiles.

A tiling is \emph{monohedral} if its tiles are congruent, and
\emph{isohedral} if its symmetry group is transitive on tiles.
In curvature $-1$, Siegel's theorem~\cite{siegel} gives the sharp
area bound $\pi/42$ for isohedral plane monotiles, attained by the
$(2,3,7)$ reflection triangle in Figure~\ref{fig:intro-tilings}(a).
There is no corresponding area bound for arbitrary plane monotiles:
Di Giosia et al.~\cite[arXiv v1, Propositions 4.8--4.9]{digiosia}
construct triangles of every area $0<a<\pi$, and Margulis and
Mozes~\cite[Theorem 5]{margulis} construct strictly convex $n$-gonal
monotiles of every area $0<a<(n-2)\pi$ for $n\ge5$, as reported
in~\cite[arXiv v1, discussion following Corollary 4.10]{digiosia}.

Small area need not imply small diameter. Our plane theorem rules
out diameters tending to zero for all convex monotiles, even with
varying side counts. Zare~\cite{zare2010,zare} described how
nonconvex monotiles can have arbitrarily small diameter. Following
his idea, we give explicit geodesic $(2q+3)$-gons with area less
than $1/q^2$ and diameter less than $3/q$. Their side counts tend to
infinity; Figure~\ref{fig:intro-tilings}(b) shows the case $q=2$.
These examples explain the two hypotheses in our plane theorem:
fixed side count, or the vertex condition defined below.

\begin{figure}[H]
\centering
\begin{minipage}[t]{0.485\linewidth}
\centering
\includegraphics[width=0.85\linewidth]{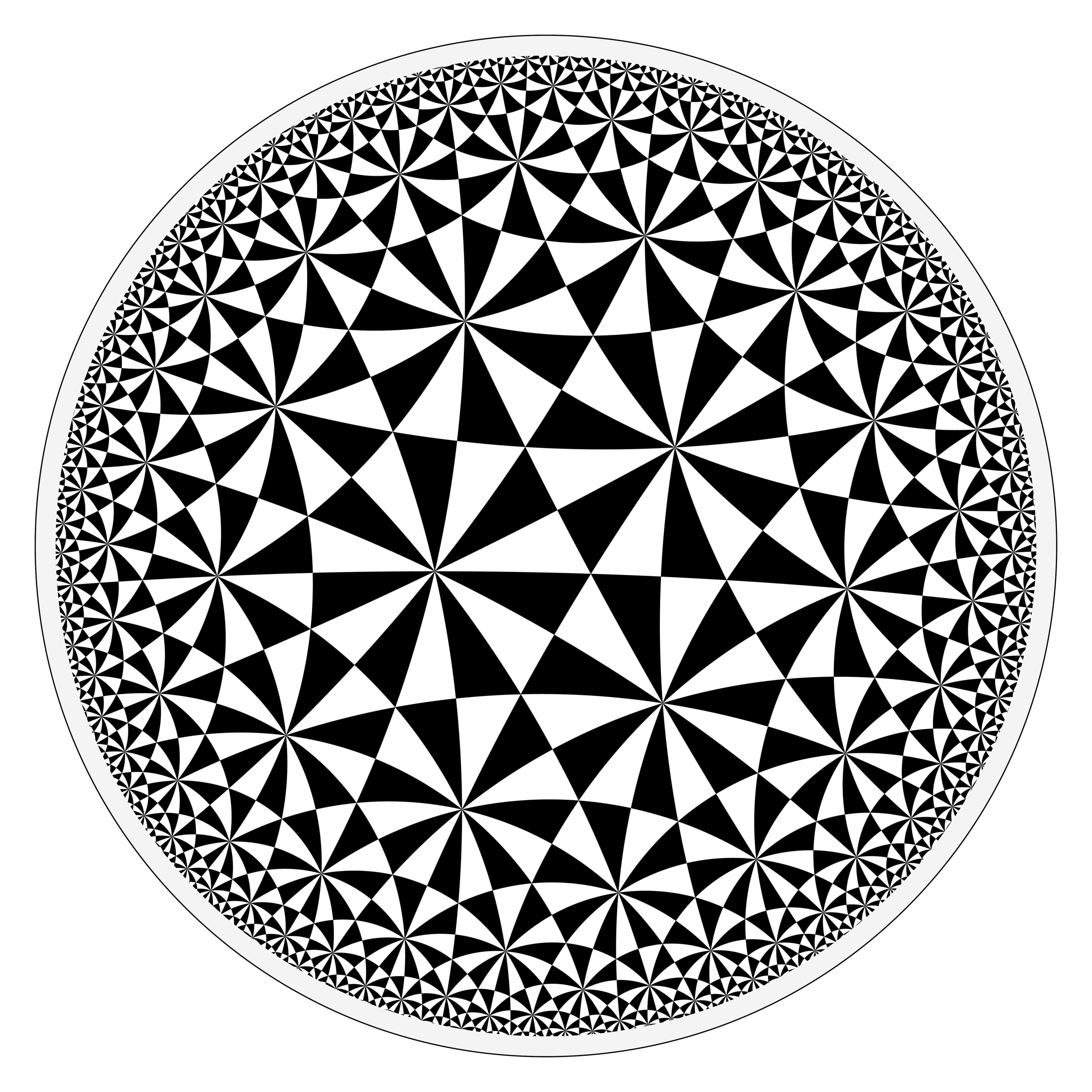}
\par\smallskip\small (a) Coxeter $(2,3,7)$ reflection tiling
\end{minipage}\hfill
\begin{minipage}[t]{0.485\linewidth}
\centering
\includegraphics[width=0.85\linewidth]{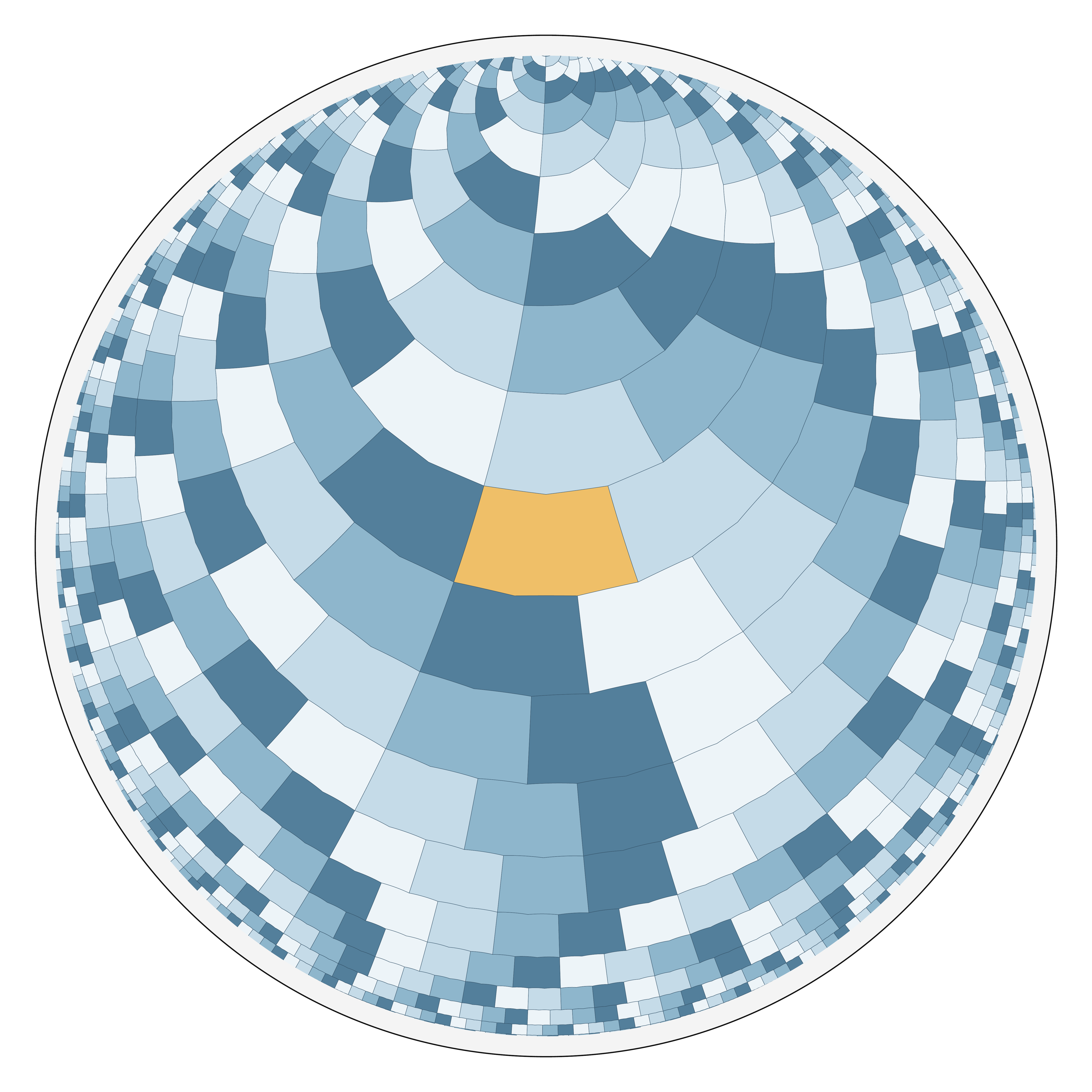}
\par\smallskip\small (b) A geodesic realisation of Zare's idea
\end{minipage}
\caption{Tilings in the Poincar\'e disc, shown for $|z|\le0.96$.
Left: the $(2,3,7)$ reflection tiling by triangles of area $\pi/42$.
Right: the nonconvex heptagonal member $q=2$ of the geodesic
realisation of Zare's idea in Section~\ref{sec:small};
the tile $T_{0,0}$ is shown in gold.}
\label{fig:intro-tilings}
\end{figure}

\subsection{The two main theorems}

All hyperbolic metrics have curvature $-1$.
A polygonal tile is a compact embedded closed disc with nonempty
interior and finitely many geodesic sides. Tiles cover the ambient
surface and have pairwise disjoint interiors. We remove redundant
straight corners, so $n$ counts genuine sides. Non-edge-to-edge
incidences are allowed. For tiles on a closed surface, congruence and convexity refer
to their lifts to $\HH$; $\diam_S$ denotes diameter in the
ambient surface metric. Congruence may reverse orientation; on a closed surface it need not come from a global surface isometry.

The \emph{tiling vertices} are the genuine corners of the tiles.
They include vertices at which only two tiles meet, but exclude
auxiliary subdivision points. Write $\mathcal V$ for their set.

\begin{definition}[Vertex-properness]\label{def:vertex-proper}
A tiling $\mathcal T$ is \emph{vertex-proper} if
\begin{equation}\tag{V}\label{eq:vertex-condition}
 \#\{T\in\mathcal T:w\in T\}\ge3
 \qquad\text{for every }w\in\mathcal V,
\end{equation}
where distinct tiles are counted. A prototile $P$ is
\emph{vertex-proper for $M$} if it admits at least one vertex-proper
monohedral tiling of the specified ambient surface $M$.
It is \emph{universally vertex-proper for $M$} if it admits a
monohedral tiling of $M$ and every such tiling is vertex-proper.
\end{definition}

Here $M$ is $\HH$ or a closed hyperbolic surface.
Condition~\eqref{eq:vertex-condition} allows $T$-junctions: a tile
whose side interior contains $w$ is counted even though $w$ is not
its corner. Every tiling by convex geodesic polygons is vertex-proper:
all sectors have angle at most $\pi$, and a genuine corner has
angle less than $\pi$, so two sectors cannot fill the tangent plane.
Thus a convex geodesic monotile is universally
vertex-proper, although our plane theorem only needs the existential
condition. Note that the authors' two series of works on spherical tilings (\cite{wangyan,liaowang}) 
provide many nonconvex vertex-proper monotiles. 

\begin{theorem}[Diameter bounds in the hyperbolic plane]
\label{thm:convex-diameter}
Let $P$ be a compact simple geodesic polygon admitting a monohedral
tiling of $\HH$, and put $D=\diam(P)$.
\begin{enumerate}
\item For each fixed $n\ge3$, there is a constant
$d_n^{\mathrm{pl}}>0$ such that $D>d_n^{\mathrm{pl}}$ whenever $P$
has $n$ sides. Reflex angles are allowed.
\item There is an absolute constant $d_*>0$ such that $D>d_*$
whenever $P$ is a vertex-proper prototile for $\HH$, independently
of the number of sides. In particular, this holds whenever $P$ is convex.
\end{enumerate}
Both conclusions allow non-edge-to-edge tilings and require no
periodicity or transitive symmetry group.
\end{theorem}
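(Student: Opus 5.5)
We argue by contradiction and compactness, following the abstract's outline: qualitative proofs first, explicit constants afterwards. Suppose $P_k$ are monotiles of $\HH$ with $D_k=\diam(P_k)\to0$. Each has either exactly $n$ sides (part 1) or is vertex-proper for $\HH$ (part 2). Fix tilings $\mathcal T_k$ and rescale by $1/D_k$. The rescaled metrics have curvature $-D_k^2\to0$, so the rescaled prototiles $\tilde P_k$ have diameter $1$ and converge, after isometries and passing to a subsequence, to a Euclidean polygon $P_\infty$ of diameter $1$ in the Hausdorff sense. The corner angles converge as well.

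The key local quantity is the corner balance around each tiling vertex. Take $w\in\mathcal V$ and the tiles containing it. Each tile contributes either a corner angle of $P$ or a straight angle $\pi$ (a $T$-junction), and these contributions sum to $2\pi$. So every vertex gives a linear equation $\sum_j m_j(w)\alpha_j+\pi\,t(w)=2\pi$ with nonnegative integer coefficients, where $\alpha_1,\dots,\alpha_n$ are the corner angles. Gauss--Bonnet gives $\Area(P)=(n-2)\pi-\sum_j\alpha_j$.

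We want an averaged count, over a large ball, of how often each corner type is used. Corners of the tiles in a ball of radius $R$ must be absorbed by vertex equations. Counting tiles in a ball (covering duality: each tile lies in an $R+D$ ball and the tiles cover the $R$ ball) gives approximate frequency vectors. These satisfy the averaged balance $\sum_j\alpha_j=2\pi\cdot(\#\text{vertices per tile})-\pi\,(\#T\text{-junctions per tile})$, up to a boundary error. The boundary factor is the ratio of the areas of the balls of radius $R\pm D$ to that of radius $R$. In $\HH$ this ratio does not tend to $1$ as $R\to\infty$, but it tends to $1$ as $D\to0$ with $R$ chosen comparable to $1$. That is exactly the regime of the contradiction hypothesis.

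Hence, as $D\to0$, the angle vector $(\alpha_j)$ lies within $o(1)$ of a packing polytope defined by the coupled integer constraints, namely nonnegative integer combinations summing to $2\pi$ with frequencies summing consistently. The integer gap estimate then says that the average Euclidean-type deficit $(n-2)\pi-\sum\alpha_j$ cannot be $o(1)$ unless it is exactly $0$. But the area is positive and comparable to the area of the near-Euclidean polygon, so we get $\Area(P_k)\asymp D_k^2\to0$. The main task is to show that the deficit is forced to be bounded below in terms of the combinatorial data. Here the integrality helps: a frequency-weighted identity with rational coefficients of bounded denominator (at most $n$ in part 1) makes $\sum\alpha_j$ lie in a discrete set near $(n-2)\pi$. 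Then $(n-2)\pi-\sum\alpha_j$ is either $0$ or at least $c_n>0$, while it is $\asymp D^2$. This is the contradiction.

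In part 2 the denominators must be controlled without reference to $n$. Vertex-properness gives at least three tiles at each vertex, so each corner angle participating in a vertex equation has multiplicity structure bounded through angle constraints. There are at most $2\pi/\min$ sectors, and small-angle corners are controlled because a near-Euclidean polygon of diameter $1$ has boundedly many corners of angle bounded away from $\pi$. Corners with angle near $\pi$ are only possible at vertices with three or more tiles if the other sectors are small, which is again bounded.

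I expect this step to be the main obstacle: bounding, independently of $n$, the denominators, or equivalently the number of relevant corner types, in the integer gap. I would first try to discard corners of angle within $\varepsilon$ of $\pi$ by grouping them, and to show that condition \eqref{eq:vertex-condition} forces each such corner to pair with corners of angle at most $\pi-\varepsilon'$. The Zare examples show that without \eqref{eq:vertex-condition} this grouping must fail.
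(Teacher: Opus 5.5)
Your part~1 argument breaks at the claim that a frequency-weighted identity places $\sum_j\alpha_j$ in a discrete set near $(n-2)\pi$. Integrality of the star equations cannot do this by itself. The isosceles triangles of Di Giosia et al.\ recalled in Section~\ref{sec:classical-diameters} satisfy such integer star equations, yet they are plane monotiles of every area $0<a<\pi$. So for $n=3$ the deficit $(n-2)\pi-\sum_j\alpha_j$ takes all small positive values.

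The only way small diameter can help is through the boundary term, and there your estimate is too coarse. The ball-count inclusions determine each label frequency only up to the factor $\ballarea(R+D)/\ballarea(R-D)$, which exceeds $e^{2D}$ for every $R>D$. Hence the averaged identity fixes $\sum_j\alpha_j$ only up to an error of order $(n-2)D$. The deficit, however, is $a\le2\pi(\cosh D-1)=O(D^2)$; only this upper bound holds, and $a\asymp D^2$ fails for thin tiles. Nor are the limiting frequencies rational with denominators at most $n$.

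The missing idea is to apply the integer gap to an auxiliary optimum rather than to $\sum_i x_i$. In the paper, $\mu$ is the minimum of $\sum_i y_i$ over $y\ge0$ with $q_t\cdot y\ge r_t$ for all star types of the given tiling. The angle vector is feasible, so $\mu<n-2$. Complementary slackness realises $\mu$ as the maximum of a bounded integer packing polytope; it keeps only rows with positive dual weight and the support coordinates, since naively reversing the inequalities changes the optimum. Hence $\mu\le n-2-\varepsilon_n$ by Lemma~\ref{lem:gap}. Normalised star counts in large balls are dual feasible and give $\mu\ge e^{-2D}\sum_i x_i$. Now the $O(D)$ boundary loss is compared with the fixed gap $\varepsilon_n$, not with the area, and together with $a\le2\pi(\cosh D-1)$ this forces $D>d_n^{\mathrm{pl}}$.

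The paper's qualitative alternative passes to limits of normalised star-type frequencies. It controls the unbounded multiplicities of vanishing angles by a tail estimate and Fatou's lemma. It then uses Lemma~\ref{lem:plane-barycenter} to extract finitely many types whose \emph{exact} equations hold in every late tiling. Weighting these gives $\sum_i\alpha_i\ge(n-2)\pi$, contradicting $a>0$. Your Euclidean rescaling plays no role in either argument.

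For part~2 you leave the $n$-independence open, and the proposed route cannot supply it. A simple nonconvex polygon of diameter $1$ can have arbitrarily many corners with angles far from $\pi$, since sharp and reflex corners may alternate and only the signed turning equals $2\pi$. More fundamentally, even an $n$-independent integer gap $\varepsilon$ would not suffice. The boundary error acts on $\sum_i x_i\approx n-2$, so your averaged-count mechanism would only yield $D\gtrsim\varepsilon/(2(n-2))$, the same decay as in $d_n^{\mathrm{pl}}$.

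The missing idea is that vertex-properness forces large diameter when $n$ is large, so the gap never has to be made uniform. By Lemma~\ref{lem:dual-simple}, the dual of the subdivided tiling is locally finite and planar. Its faces are cycles of length at least $3$ by \eqref{eq:vertex-condition}, its degrees are at least $n$, and for $n\ge6$ an Euler count inside a bigon excludes multiple edges. Applying $e\le3v-6$ to the graph balls $S_k$ around a tile gives $ns_k\le e(S_k)+e(S_{k+1})\le3s_k+3s_{k+1}-12$, so $s_k$ grows at least like $((n-3)/3)^k$. These tiles lie in a hyperbolic ball of radius $(k+1)D$, so $as_k\le2\pi(\cosh((k+1)D)-1)$. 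Letting $k\to\infty$ gives $D\ge\log((n-3)/3)$ for $n\ge7$ (Proposition~\ref{prop:plane-graph-growth}). Part~1 for $3\le n\le6$ then gives $d_*=\min\{d_3^{\mathrm{pl}},\dots,d_6^{\mathrm{pl}},\log(4/3)\}$. Convex tiles are covered because convex geodesic tilings are automatically vertex-proper.
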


\begin{theorem}[Area bounds on closed hyperbolic surfaces]
\label{thm:gap}
Let a closed hyperbolic surface be tiled by finitely many
congruent geodesic polygons, each an embedded closed disc, and let
$a$ be their common area.
\begin{enumerate}
\item For each fixed $n\ge3$, there is a constant $c_n>0$ such that
$a\ge c_n$ whenever the tiles have $n$ sides. Reflex angles are allowed.
\item There is an absolute constant $c_*>0$ such that $a\ge c_*$
whenever the tiling is vertex-proper, independently of the number
of sides. In particular, this holds for convex tiles.
\end{enumerate}
The constants are independent of the topology and hyperbolic metric,
and both conclusions allow non-edge-to-edge tilings.
\end{theorem}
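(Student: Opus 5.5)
We will prove Theorem~\ref{thm:gap} by exact corner balance followed by an integer-gap argument for a packing polytope. Let the closed surface $S$ have genus $g\ge2$ and be tiled by $F$ congruent copies of $P$, with common area $a$. Gauss--Bonnet gives $F a=4\pi(g-1)$. For a single tile it gives $a=(n-2)\pi-\sum_i\alpha_i$, where $\alpha_1,\dots,\alpha_n$ are the interior angles of $P$ (reflex ones allowed, each in $(0,2\pi)$). We then count incidences. Each genuine corner of a tile sits at a point $w$, and the sectors at $w$ sum to $2\pi$. A $T$-junction contributes a straight sector of angle $\pi$ from the tile whose side passes through $w$. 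Write $m_i$ for the number of occurrences of angle $\alpha_i$ across the tiling, so $m_i=F$ for every $i$. Let $V$ be the number of distinct points of $\mathcal V$, and let $h$ be the number of straight sectors. Summing angles over $w\in\mathcal V$ gives $F\sum_i\alpha_i+\pi h=2\pi V$. The Euler characteristic of the induced cell decomposition, with edges counted by subdividing at all vertices, gives $V-E+F=2-2g$, where $2E=nF+h$. Combining these three relations with Gauss--Bonnet for the tile recovers consistency. The key outcome is that each vertex $w$ yields a relation $\sum_i k_{w,i}\alpha_i+\pi\epsilon_w=2\pi$ with nonnegative integers $k_{w,i}$ and $\epsilon_w\in\{0,1\}$.

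The core step is the gap estimate. Normalise $x_i=\alpha_i/\pi$. Then $(x_1,\dots,x_n)$ lies in a packing polytope cut out by finitely many equations $\sum_i k_{w,i}x_i=2-\epsilon_w$ with nonnegative integer coefficients. These vertex types are the ``arbitrarily coupled'' constraints. Averaging the vertex relations with weights given by how often each type occurs, using $m_i=F$, gives
\[
\frac{a}{\pi}=n-2-\sum_i x_i=\frac{1}{F}\Bigl(\,\sum_w(2-\epsilon_w)\Bigr)-\sum_i x_i+(\text{Euler terms}).
\]
We will show that this expression equals the positive defect of a linear functional evaluated on a rational point with bounded denominators. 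Concretely, we aim for a lemma of the following form. If $x\in(0,2)^n$ satisfies a system of equations $\sum_i k_i x_i = r$ with $k_i\in\ZZ_{\ge0}$ and $r\in\{1,2\}$, and if $n-2-\sum x_i>0$, then $n-2-\sum x_i\ge\gamma_n$. Here $\gamma_n>0$ depends only on $n$.

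For part~1 we prove this lemma qualitatively. Suppose the positive defect tends to zero along a sequence. The number of vertex types that can occur is finite once each $k_{w,i}\le 2/\min x_i$ is bounded. If $\min x_i\to0$, a tiny angle must be packed many times, and we will argue that the positive area then forces a fixed fraction of slack. The finiteness of the relevant coefficient patterns, modulo this degenerate case, then yields a minimum positive value by a rationality/compactness argument (Cramer's rule bounds the denominators). For part~2, condition~\eqref{eq:vertex-condition} forces every vertex relation to involve at least three sectors. This caps the contribution of large angles: at most one sector at each vertex can exceed $2\pi/3$ worth of "defect room". We will then bound the defect per tile by an $n$-independent amount via a discharging argument. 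Each tile gets charge $a/\pi$; we redistribute the vertex slack $2-\sum$ (sectors) so that every tile receives at least $c_*/\pi$.

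The main obstacle is uniformity. Small angles permit unbounded coefficients $k_{w,i}$, so the naive compactness argument fails. We will first handle this with a separate estimate: near a corner of angle $\alpha<\pi/3$, the tile contains a thin geodesic wedge. Because $P$ is embedded in a closed disc, the compact congruent tile still carries area of order depending on its inradius. So we will combine the combinatorial gap with a geometric lower bound for the area of a region whose corners are sharp, and use the vertex condition to rule out long chains of $2$-tile vertices in part~2.
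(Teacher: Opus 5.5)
Your outline names the right ingredients (corner balance, a packing polytope, an integer gap), but the lemma you aim for is false as stated. The ingredient that repairs it is one you write down and never use. The single relation $x_1+x_2+2x_3=1$ has nonnegative integer coefficients and right side $1$, yet $1-\sum_i x_i=x_3$ can be any small positive number. What rules this out in a tiling is the balance $\sum_w m_i(w)=F$ for every label $i$. Summing the vertex inequalities with this balance shows that $x=\alpha/\pi$ maximises $\sum_i y_i$ over $Q=\{y\ge0:\sum_i m_i(w)y_i\le2-h(w)\text{ for all }w\}$. Hence $a/\pi=(n-2)-\max_Q\sum_i y_i$ exactly. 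Your displayed averaging identity is circular, since $\frac1F\sum_w(2-\epsilon_w)$ is just $\sum_i x_i$. Also, $x$ itself need not be rational, so Cramer's rule helps only after $\sum_i x_i$ is identified with an optimum attained at a vertex.

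Your treatment of small angles would fail. A thin-wedge or inradius estimate cannot give a uniform bound, because monotiles of $\HH$ with arbitrarily small area and inradius exist; the bound must come from the global balance. Once maximality is available, degeneration is harmless. For $a_j\to0$, take angle vectors converging to $\alpha^*$ with $\sum_i\alpha^*_i=(n-2)\pi$. Labels with positive limit have bounded multiplicities, so only finitely many patterns occur in those coordinates. Any pattern exceeding $2\pi$ at $\alpha^*$ is eventually excluded. Labels with zero limit contribute nothing at $\alpha^*$, however often they occur. Hence $\alpha^*/\pi\in Q_j$ for large $j$, giving $n-2\le n-2-a_j/\pi$, a contradiction. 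For explicit constants you need a gap estimate uniform in the coefficient sizes. The paper gets it from $|\det C|\prod_i y_i\le2^k$ at the maximising vertex (the rows of $C\,\mathrm{diag}(y)$ sum to $2$), together with an inductive lower bound on $\prod_i y_i$ obtained from coordinate faces.

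For part 2, the discharging plan is not specified. There is also no vertex slack to redistribute, since the sectors sum exactly to $2\pi$ at every vertex. The missing step is the Euler count you already set up. Vertex-properness means $nF\ge3V_{\mathrm{full}}+2V_{\mathrm{half}}$. With $2E=nF+h$ this gives $3V\le2E$, hence $\chi\le F-E/3$ and $Fa=-2\pi\chi\ge\frac\pi3\bigl((n-6)F+h\bigr)$. Thus $a\ge\pi/3$ for $n\ge7$. For $3\le n\le6$ this count gives nothing, so part 1 must be invoked, and $c_*=\min\{c_3,\dots,c_6,\pi/3\}$ works. Finally, writing $\chi=2-2g$ assumes orientability, which the theorem does not. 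Nothing in the argument uses an orientation, so work with $\chi<0$ throughout.
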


We first give qualitative proofs and then obtain explicit constants.
No optimality is asserted for these geometric constants. The
closed-surface area bounds also give ambient diameter bounds and
upper bounds for the number of tiles. In the plane, the convex
conclusion includes compact convex monotiles without any initial
assumption on their boundary, by Proposition~\ref{prop:convex-body-monotile}.

\subsection{Notation and proof mechanism}

The prototile is $P$, and its copies are $T_p$. For a finite tiling
of a closed surface $S$, let $v,e,f$ count vertices, edges and tiles
after subdivision at all genuine corners. Put $\chi=\chi(S)<0$;
then $v-e+f=\chi$ and
\begin{equation}\label{eq:surface-area}
 \Area(S)=-2\pi\chi.
\end{equation}
No orientability is assumed. For orientable genus $g$, $\chi=2-2g$;
for nonorientable genus $h$, $\chi=2-h$.
Label the corners cyclically by $i=1,\ldots,n$, with indices modulo
$n$. Write $\ang{i}$ for angle $i$, $x_i=\ang{i}/\pi$, and
$\bar i$ for the edge from $i$ to $i+1$.
A \emph{full vertex} is a corner of every incident tile.
A \emph{half vertex} is in the side interior of one tile and is a
corner of the others. Its genuine corner angles sum to $\pi$,
whereas those at a full vertex sum to $2\pi$.
Write $a=\Area(P)$, $D=\diam(P)$ in the plane, and
$\diam_S(T_p)$ for ambient tile diameters on $S$.

Tao's discussion of small spherical tiles~\cite{Tao} uses ball
counts, integer angle relations and determinants. Negative curvature
gives a different direction for the angle sum:
\[
 \sum_i x_i=n-2-\frac a\pi<n-2.
\]
Our integer-gap estimate controls packing maxima below integers.
On a closed surface, exact counts of each labelled corner identify the angle
sum with the maximum $\mu$ of a packing problem, so
$a/\pi=(n-2)-\mu$. The fixed-side qualitative proof takes limits
of angle vectors; it does not require compactness of polygonal shapes.

The quantitative tool is Lemma~\ref{lem:gap}. If
$Q=\{y\in\RR_{\ge0}^k:Ty\le b\}$ is bounded, $T$ has
nonnegative integer entries, $b$ has entries $1$ or $2$, and
$\mu=\max_Q\sum_i y_i<\ell$ for a positive integer $\ell$, then
\[
 \ell-\mu\ge
 \begin{cases}
 B_{k-\lfloor\ell/2\rfloor}^{-1},&1\le\ell\le2k,\\
 \ell-2k,&\ell>2k,
 \end{cases}
 \qquad B_0=1,\quad B_{j+1}=B_j(2B_j+1).
\]
The estimate is sharp for this class of integer systems, with
equality attained by diagonal constraints. For diagonal systems,
the same extremal gap follows from Ambro and
Barc\u{a}u~\cite[Theorem 0.2(a)]{ambrobarcau}; see also
Nathanson~\cite[Theorem 5]{nathanson} for greedy unit-fraction
underapproximations. We prove that the bound persists for arbitrary
coupled constraints, with no bound on their number or coefficients.
The arithmetic equality examples need not be realised by tilings.

In the plane, no finite total corner count is available. Let $\mu$
instead be the minimum of the associated covering problem.
Complementary slackness produces a packing problem with the same
optimum, so the integer estimate still applies. Complete vertex
stars counted in large balls give
\[
 e^{-2D}\left(n-2-\frac a\pi\right)
 \le\mu\le n-2-\varepsilon_n,
 \qquad \varepsilon_n=B_{\lceil n/2\rceil+1}^{-1}.
\]
The boundary factor does not disappear as the ball radius grows.
Together with $a\le2\pi(\cosh D-1)$, it excludes $D\to0$.
The qualitative proof uses balls whose radii tend to zero more
slowly than the tile diameters. It controls unbounded corner
multiplicities when angles tend to zero and uses finite convex
representations to recover exact angle equations. Vertex-properness
then supplies the Euler and adjacency-growth estimates needed to
remove the restriction on the side count.

\subsection{Related work and organisation}

The large-side Euler counts for edge-to-edge hyperbolic surface
tilings occur in Li, Wang, Wu and Yan~\cite{lisingle,limultiple,limax}.
Our fixed-side arguments also treat the small side counts and allow
reflex angles and half vertices. Di Giosia et al.~\cite{digiosia}
and Hirsch, Li, Petty and Xue~\cite{hirsch} study perimeter
minimisation, while Goodman-Strauss~\cite{goodman} gives criteria
for generic triangular hyperbolic plane tiles. Our large-side estimate uses
an elementary planar edge count. Related curvature and growth
estimates appear in~\cite{bauespeyerimhoff,keller}.

Section~\ref{sec:qualitative} gives the qualitative arguments,
and Section~\ref{sec:integer} proves the sharp integer estimate.
Sections~\ref{sec:convex-diameter} and~\ref{sec:gap} derive explicit
plane diameter and closed-surface area bounds.
Section~\ref{sec:classical-diameters} examines classical convex
examples, and Section~\ref{sec:small} gives the geodesic
realisation of Zare's idea.
Section~\ref{sec:extensions} treats regular curved sides, symmetry
and compact quotients.

\section{Qualitative proofs}\label{sec:qualitative}

We first prove the existence of fixed-side gaps by taking limits of
angle vectors. Both the plane and the closed-surface arguments allow
reflex angles. We then explain how vertex-properness gives uniform
bounds and give a qualitative form of the common integer-gap principle.
The explicit estimates are proved in Sections~\ref{sec:convex-diameter}
and~\ref{sec:gap}.

\subsection{Local finiteness and hyperbolic balls}\label{sec:local-finite}

Write
\[
 \ballarea(r)=2\pi(\cosh r-1)
\]
for the area of a hyperbolic ball of radius $r$.
A tiling by copies of a compact tile with positive area $a$, boundary
of area zero, and diameter $D$ is locally finite. Indeed, every tile meeting a compact set $K$
lies in its closed $D$-neighbourhood $K_D$, which has finite area.
Any $m$ such tiles have disjoint interiors and therefore satisfy
$ma\le\Area(K_D)$. Thus there are only finitely many of them.
The argument applies to each tiling separately and requires no common
inradius or area bound for a family of prototiles.

\subsection{A qualitative diameter gap for plane tiles}
\label{sec:qualitative-plane}

We give a qualitative proof of the fixed-side diameter bound for
simple geodesic plane monotiles, allowing reflex angles.
Proposition~\ref{prop:plane-fixed-n} gives an explicit bound.

\subsubsection{A finite representation of an integrable barycenter}

We use the following elementary finite-dimensional fact. The conclusion
concerns the convex hull itself, not merely its closure. See also
Bayer and Teichmann \cite[Theorem 1 and Remark 2]{bayerteichmann}.

\begin{lemma}\label{lem:plane-barycenter}
Let $(x_t)$ be a finite or countable family in $\RR^d$, and suppose
$p_t>0$, $\sum_t p_t=1$, and $\sum_t p_t\|x_t\|<\infty$.
Then $x=\sum_t p_t x_t$ is a convex combination of finitely many of
the points $x_t$.
\end{lemma}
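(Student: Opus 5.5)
The plan is to show that $x$ lies in the relative interior of the convex hull of a finite subfamily, proceeding by induction on the dimension of the affine hull. Put $C=\operatorname{conv}\{x_t\}$, which is not assumed closed, and let $A$ be the affine hull of the points $x_t$. The series $\sum_t p_t x_t$ converges absolutely because $\sum_t p_t\|x_t\|<\infty$, so $x$ is well defined and lies in the closed set $A$.

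If $x$ lies in the relative interior of $C$ inside $A$, we are done. Indeed, the relative interior of the convex hull of an arbitrary set is the union of the relative interiors of the hulls of its finite subsets that affinely span $A$. Concretely, choose affinely independent points $x_{t_0},\dots,x_{t_m}$ spanning $A$, and form the simplices obtained by adjoining further points. A small relative neighbourhood of any relative-interior point of $C$ lies in the hull of finitely many $x_t$, by compactness of a small simplex around $x$ whose vertices are finite convex combinations.

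Suppose instead that $x$ lies on the relative boundary of $C$. Then there is a supporting affine functional $\phi$ on $A$, not constant on $A$, with $\phi(x)\ge\phi(y)$ for all $y\in C$; this is the supporting hyperplane theorem for the convex set $C$ in $A$. We have
\[
\sum_t p_t\bigl(\phi(x)-\phi(x_t)\bigr)=\phi(x)-\phi\Bigl(\sum_t p_t x_t\Bigr)=0.
\]
Here $\phi$ commutes with the absolutely convergent series because $\phi$ is affine and continuous and $\sum_t p_t=1$. Every summand is nonnegative and every $p_t>0$, so $\phi(x_t)=\phi(x)$ for all $t$. Thus all the $x_t$ lie in the hyperplane $\{\phi=\phi(x)\}\cap A$. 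This contradicts the definition of $A$ as their affine hull, because $\phi$ is nonconstant on $A$. Hence the boundary case cannot occur, and $x$ lies in the relative interior.

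The dimension-zero case, in which all $x_t$ are equal, is trivial. No real induction is therefore needed: the supporting hyperplane argument directly forces $x$ into the relative interior.

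The main obstacle is justifying the relative-interior step cleanly, namely that relative-interior points of the hull of an infinite set are finite convex combinations. The argument is as follows. Take $x\in\operatorname{ri}C$ and a small simplex $\Delta\subset\operatorname{ri}C$ in $A$ containing $x$, with $\dim A+1$ vertices. By the definition of $C$, each vertex of $\Delta$ is a finite convex combination of points $x_t$. Hence $x$, being a convex combination of the vertices of $\Delta$, is a convex combination of finitely many $x_t$. Carathéodory's theorem can then reduce this to at most $d+1$ points if desired.
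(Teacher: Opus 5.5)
Your argument is essentially the paper's: a nonconstant supporting affine functional at a relative-boundary point, combined with $p_t>0$ and $\sum_t p_t\|x_t\|<\infty$, forces every $x_t$ into a proper affine subspace of $A$. One step is missing, however. The case split between $x\in\operatorname{ri}C$ and $x$ on the relative boundary of $C$ is exhaustive only if $x\in\overline C$, and you prove only $x\in A$. The paper first establishes $x\in\overline C$ by moving the unused mass of each partial sum onto one fixed point $x_{t_0}$. This gives finite convex combinations converging to $x$. Alternatively, your own computation disposes of the case $x\notin\overline C$: a strictly separating affine functional makes every term of $\sum_t p_t(\phi(x)-\phi(x_t))$ strictly positive, yet the sum is zero.

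Your last paragraph, by contrast, treats a non-issue. Since $\operatorname{ri}C\subseteq C$, any $x\in\operatorname{ri}C$ is a finite convex combination by the definition of $C$, and no simplex is needed. The real content of the paper's simplex-approximation step is that $\operatorname{ri}\overline C\subseteq C$. In your version this fact is hidden in the appeal to the supporting hyperplane theorem for the possibly non-closed set $C$, at a point of $\overline C\setminus\operatorname{ri}C$ that need not lie in $C$. That appeal rests on $\operatorname{ri}\overline C=\operatorname{ri}C$. Citing it is legitimate, but that is where the work lies, not in your final paragraph.
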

\begin{proof}
Put $C=\operatorname{conv}\{x_t\}$ and
$H=\operatorname{aff}\{x_t\}$. Moving the unused mass of a partial
sum to one fixed support point gives finite convex combinations
converging to $x$, so $x\in\overline C$.
If $x$ were on the relative boundary of $\overline C$ in $H$, a
supporting affine function $\ell$, not identically zero on $H$, would
satisfy $\ell\ge0$ on $C$ and $\ell(x)=0$.
Integrability gives $\sum_t p_t\ell(x_t)=0$.
Thus $\ell(x_t)=0$ for every $t$, contrary to the definition of $H$.
Consequently $x$ is in the relative interior of $\overline C$.
Choose in $H$ a small simplex surrounding $x$, with vertices in
$\overline C$. Approximate its vertices by points of $C$ closely
enough that the new simplex still contains $x$. Convexity gives
$x\in C$, which is exactly the asserted finite representation.
The zero-dimensional case is immediate.
\end{proof}

\subsubsection{The gap for a fixed number of sides}

\begin{proposition}\label{prop:plane-fixed-n-qualitative}
For each fixed integer $n\ge3$ there is $d_n>0$ such that every
compact simple geodesic $n$-gonal monotile of $\HH$ has diameter at
least $d_n$. No lower bound on angles, side lengths, or aspect ratio
is assumed.
\end{proposition}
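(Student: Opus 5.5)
We argue by contradiction and compactness of angle vectors. Suppose $P_m$ is a sequence of compact simple geodesic $n$-gonal monotiles of $\HH$ with $D_m=\diam(P_m)\to0$. Normalise angles $x^{(m)}_i=\ang{i}/\pi\in(0,2)$; passing to a subsequence, $x^{(m)}\to x\in[0,2]^n$. Since $a_m\le\ballarea(D_m)\to0$, the angle sums satisfy $\sum_i x^{(m)}_i=n-2-a_m/\pi\to n-2$, so $\sum_i x_i=n-2$. The plan is to show that the tiling structure forces $\sum_i x^{(m)}_i$ to be, up to an error of order $D_m$, a convex combination of finitely many ``vertex-star'' relations. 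These relations will be shown to hold exactly in the limit, and we then derive a contradiction with the strict inequality $\sum_i x_i^{(m)}<n-2$ by an integer-gap argument.

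\emph{Step 1: corner statistics in a ball.} Fix the tiling $\mathcal T_m$ by copies of $P_m$. It is locally finite by Section~\ref{sec:local-finite}. Choose radii $r_m\to0$ with $D_m/r_m\to0$, and count, inside a ball $B$ of radius $r_m$, the vertices $w\in\mathcal V$ whose full star lies in $B$. Each such vertex gives a relation $\sum_i N_{w,i}x^{(m)}_i=2$ (full vertex) or $=1$ (half vertex), with $N_{w,i}\in\ZZ_{\ge0}$ counting corners of label $i$ at $w$. Tiles contained in $B$ contribute each label once. Tiles meeting $\partial B$ lie within distance $D_m$ of it; in curvature $-1$ at scale $r_m\to0$ they carry a fraction $O(D_m/r_m)\to0$ of the area. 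Summing Gauss--Bonnet over the interior tiles and weighting vertex types by their frequencies $p_{t}$ (proportion of corner mass) therefore gives
\[
 \sum_t p^{(m)}_t\,\frac{N_t}{b_t}\approx\frac1{\#\text{interior tiles}}(1,\ldots,1),
\]
up to $o(1)$. Here $t$ ranges over vertex types $N_t\in\ZZ_{\ge0}^n$ with right side $b_t\in\{1,2\}$. This exhibits the uniform vector $(1,\dots,1)$ as a limit of positive combinations of finitely or countably many integer vectors.

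\emph{Step 2: passing to an exact limit.} This is the main obstacle. When some limiting angles $x_i$ vanish, the multiplicities $N_{t,i}$ at a vertex are unbounded, so infinitely many types can occur and there is no a priori compactness. I would first control the first moments: $\sum_t p_t\|N_t\|$ is bounded by the total corner count per tile, which is $n$, so it is integrable. Lemma~\ref{lem:plane-barycenter} then lets me replace each barycenter by a finite convex combination of types. Since types are integer vectors with bounded weighted mass, a diagonal subsequence stabilises a finite family of types and their weights. In the limit we get $(1,\dots,1)=\sum_t \lambda_t N_t/b_t$ with $\lambda_t\ge0$ and finitely many $t$. Each type $t$ used satisfies $N_t\cdot x^{(m)}=b_t$ exactly for large $m$, hence $N_t\cdot x=b_t$.

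\emph{Step 3: contradiction.} Dotting the limit identity with $x^{(m)}$ for large $m$ yields $\sum_i x^{(m)}_i=\sum_t\lambda_t=:\Lambda$, a quantity independent of $m$. Hence $n-2-a_m/\pi=\Lambda$ for all large $m$, so $a_m$ is eventually constant. Since $a_m\to0$, this constant must be $0$, contradicting $a_m>0$. If the bookkeeping in Step 2 only yields a near-identity rather than an exact one, I would instead invoke the qualitative integer-gap principle. The maximum $\mu_m$ of the packing problem $\{y\ge0:N_t\cdot y\le b_t\}$ equals $\sum_i x_i^{(m)}$ by complementary slackness. It is strictly below the integer $n-2$, and with only finitely many constraint systems it is bounded away from $n-2$, again contradicting $a_m\to0$.
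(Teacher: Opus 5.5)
There is a genuine gap in Step~2, at the obstacle you identify yourself. Bounded weighted mass does not confine the types to a finite set. The star equation gives only $N_{t,i}\le b_t/x^{(m)}_i$, which is unbounded for $i\in Z=\{i:x_i=0\}$. The first-moment bound $\sum_t p_t\|N_t\|\le n$ is not tightness, so no diagonal subsequence ``stabilises a finite family of types''. Applying Lemma~\ref{lem:plane-barycenter} at a finite stage $m$ accomplishes nothing, since only finitely many types occur in a ball anyway.

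More seriously, nothing prevents the $Z$-moments from escaping to types of ever larger multiplicity (frequency of order $x^{(m)}_i$, multiplicity of order $1/x^{(m)}_i$). The limiting weights then satisfy only $\sum_t\lambda_tN_{t,i}\le1$ for $i\in Z$, by Fatou. So the exact identity $(1,\dots,1)=\sum_t\lambda_tN_t/b_t$ that your Step~3 needs is not available.

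The paper's proof is built to survive this, in four steps:
\begin{enumerate}
\item It first takes limits $\lambda_t$ of all countably many type frequencies.
\item It shows that the moments $\sum_t\lambda_tN_{t,i}$ equal $1$ for $i\in I=\{i:x_i>0\}$. This uses the uniform bound $N_{t,i}\le4/x_i$ for types occurring at large $m$, together with a Markov-type tail estimate in $\sum_{i\in Z}N_{t,i}$.
\item It deduces $\sum_t\lambda_tb_t=n-2$ by multiplying the limiting star equations $N_t\cdot x=b_t$, valid whenever $\lambda_t>0$, by $\lambda_t$ and summing.
\item Only then does it apply Lemma~\ref{lem:plane-barycenter}, to this integrable limiting family. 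This yields finitely many types, each present in every late tiling, whose weighted combination has entries $1$ on $I$, entries $a_i'\le1$ on $Z$, and $b$-moment $n-2$.
\end{enumerate}
Your Step~3 must change accordingly. Weighting the exact equations at stage $m$ gives $\sum_{i\in I}x^{(m)}_i+\sum_{i\in Z}a_i'x^{(m)}_i=n-2$. Positivity of $x^{(m)}_i$ then yields $\sum_ix^{(m)}_i\ge n-2$, contradicting $a_m>0$. You do not get ``$a_m$ eventually constant''.

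Your fallback does not close the gap either. In the plane there is no exact label count, so complementary slackness does not identify the packing maximum $\mu_m$ of $\{y\ge0:N_t\cdot y\le b_t\}$ with $\sum_ix^{(m)}_i$. Feasibility of $x^{(m)}$ gives only $\mu_m\ge\sum_ix^{(m)}_i$. Nothing shows $\mu_m<n-2$, so the integer-gap lemma cannot be invoked. That identification is exactly what the finite corner count \eqref{eq:label-count} supplies on closed surfaces. The appeal to ``finitely many constraint systems'' is also unjustified, though unnecessary, since Lemma~\ref{lem:qualitative-gap} is uniform in the system.

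A correct quantitative route in the plane is the one in Section~\ref{sec:convex-diameter}. Use the covering problem, whose optimum is automatically at most $\sum_ix_i<n-2$. Then combine Lemma~\ref{lem:cover-packing} with the boundary-loss bound $\mu\ge e^{-2D}(n-2-a/\pi)$ obtained from star counts in large balls.
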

\begin{proof}
Suppose, to the contrary, that prototiles $P_j$ have diameters
$D_j\to0$ and areas $a_j>0$. Label their angles
$\ang{1}_j,\ldots,\ang{n}_j$, keeping equal angles separately. Write
$\boldsymbol\alpha^{(j)}=(\ang{1}_j,\ldots,\ang{n}_j)$
and $\boldsymbol\alpha^*=(\ang{1}^*,\ldots,\ang{n}^*)$
for the corresponding angle vectors.
For each tile copy choose one isometry from $P_j$ to that copy and
transport the labels by it. No consistency between these choices is
needed: every tile contributes exactly one occurrence of each label.
Since $a_j\le \ballarea(D_j)$, Gauss--Bonnet and a subsequence give
\begin{equation}\label{eq:plane-angle-limits}
 \sum_{i=1}^n\ang{i}_j=(n-2)\pi-a_j,
 \qquad \ang{i}_j\to\ang{i}^*\in[0,2\pi],
 \qquad \sum_{i=1}^n\ang{i}^*=(n-2)\pi.
\end{equation}

\smallskip\noindent\emph{Corner counts.}
Fix a centre in $\HH$, put $R_j=\sqrt{D_j}$ and
$\Lambda_j=\ballarea(R_j)/a_j$, and omit finitely many terms so that $D_j<R_j$.
Let $c_{i,j}$ count all occurrences of the $i$th labelled corner in
the closed ball $B(R_j)$, with incidences counted separately.
If that corner of a tile lies in $B(R_j)$, the whole tile lies in
$B(R_j+D_j)$. Conversely, if a tile meets $B(R_j-D_j)$, its
$i$th corner lies in $B(R_j)$. Hence
\begin{equation}\label{eq:plane-ball-count}
 \frac{\ballarea(R_j-D_j)}{a_j}\le c_{i,j}
 \le\frac{\ballarea(R_j+D_j)}{a_j}.
\end{equation}
Since $R_j\to0$ and $D_j/R_j\to0$, division by $\Lambda_j$ yields
\begin{equation}\label{eq:plane-label-balance}
 \frac{c_{i,j}}{\Lambda_j}\longrightarrow1\qquad(1\le i\le n).
\end{equation}

\smallskip\noindent\emph{Vertex types.}
At every point at which some tile has a genuine corner, take the
complete star of incident tiles, even when counting only star centres
in $B(R_j)$. Its type is
$t=(q_t,h_t)$, where
$q_t=(q_{t,1},\ldots,q_{t,n})\in\ZZ_{\ge0}^n\setminus\{0\}$
counts labelled genuine corners and $h_t\in\{0,1\}$ counts flat
incidences from side interiors. There is at most one flat incidence
because a genuine corner is present and the total angle is $2\pi$.
Set $r_t=2-h_t\in\{1,2\}$. Every type occurring in the $j$th tiling
satisfies the exact equation
\begin{equation}\label{eq:plane-star}
 q_t\cdot\boldsymbol\alpha^{(j)}=r_t\pi.
\end{equation}
Let $\starcount_{j,t}$ count star centres of type $t$ in $B(R_j)$ and set
$\lambda_{j,t}=\starcount_{j,t}/\Lambda_j$, assigning zero to types not occurring.
Local finiteness makes these counts finite. We have
\begin{equation}\label{eq:plane-star-moments}
 \sum_t\lambda_{j,t}q_{t,i}=\frac{c_{i,j}}{\Lambda_j}\to1,
 \qquad
 \sum_t\lambda_{j,t}\le\sum_{i=1}^n\frac{c_{i,j}}{\Lambda_j}=n+o(1).
\end{equation}
The set of all types is countable. A diagonal subsequence therefore
has $\lambda_{j,t}\to\lambda_t\ge0$ for every fixed $t$.

\smallskip\noindent\emph{Limiting moments.}
Put $I=\{i:\ang{i}^*>0\}$ and $Z=\{i:\ang{i}^*=0\}$.
The set $I$ is nonempty by \eqref{eq:plane-angle-limits}.
For $i\in I$, equation \eqref{eq:plane-star} gives a uniform bound
$q_{t,i}\le C_i$ for all types occurring at sufficiently large $j$;
one may take $C_i=\lceil4\pi/\ang{i}^*\rceil$.
Write $|q_{t,Z}|=\sum_{i\in Z}q_{t,i}$. For every $K>0$,
\begin{equation}\label{eq:plane-tail}
 \sum_{|q_{t,Z}|>K}\lambda_{j,t}
 \le\frac1K\sum_{i\in Z}\sum_t\lambda_{j,t}q_{t,i}
 =\frac{|Z|+o(1)}K.
\end{equation}
The contribution of these types to an $i\in I$ moment is at most
$C_i(|Z|+o(1))/K$. There are only finitely many types with all the
$I$ coordinates bounded by their $C_i$ and $|q_{t,Z}|\le K$.
On that finite set we may pass to the limit term by term. Taking
$j\to\infty$ and then $K\to\infty$, and using Fatou's lemma for
the remaining coordinates, gives
\begin{equation}\label{eq:plane-limit-moments}
 \sum_t\lambda_tq_{t,i}=1\quad(i\in I),\qquad
 a_i':=\sum_t\lambda_tq_{t,i}\le1\quad(i\in Z).
\end{equation}
If $Z$ is empty, the relevant type set is already finite.
The inequalities for $Z$ are essential: angles tending to zero allow
unbounded multiplicities, and their moments need not be preserved.

\smallskip\noindent\emph{Finite representation.}
Whenever $\lambda_t>0$, we have $\lambda_{j,t}>0$ for every
sufficiently large $j$. Thus this fixed type satisfies
\eqref{eq:plane-star} eventually, and
$q_t\cdot\boldsymbol\alpha^*=r_t\pi$ on taking limits.
Multiply by $\lambda_t$ and sum. All summands are nonnegative, and
\eqref{eq:plane-limit-moments} gives
\begin{equation}\label{eq:plane-r-moment}
 \pi\sum_t\lambda_t r_t
 =\sum_i\ang{i}^*\sum_t\lambda_tq_{t,i}
 =\sum_{i\in I}\ang{i}^*=(n-2)\pi.
\end{equation}
In particular $L:=\sum_t\lambda_t$ is positive and finite, since
$1\le r_t\le2$.

For the types with $\lambda_t>0$, let
$w_t=(q_t,r_t)\in\RR^{n+1}$. Equations
\eqref{eq:plane-limit-moments}--\eqref{eq:plane-r-moment} state that
\begin{equation}\label{eq:plane-target-vector}
 \sum_{\lambda_t>0}\lambda_t w_t=b,
 \qquad
 b_i=\begin{cases}1,&i\in I,\\a_i',&i\in Z,\end{cases}
 \qquad b_{n+1}=n-2.
\end{equation}
Every coordinate has finite first moment, so
$\sum_t\lambda_t\|w_t\|<\infty$.
Apply Lemma~\ref{lem:plane-barycenter} with probabilities
$\lambda_t/L$. After multiplying by $L$, we obtain finitely many
types $t_1,\ldots,t_m$ with $\lambda_{t_\ell}>0$ and weights
$\mu_\ell\ge0$ such that
\[
 \sum_{\ell=1}^m\mu_\ell w_{t_\ell}=b.
\]
Because only finitely many types were selected and each has positive
limiting frequency, their exact equations \eqref{eq:plane-star}
hold simultaneously for every sufficiently large $j$.
Weighting those equations by $\mu_\ell$ gives
\[
 \sum_{i\in I}\ang{i}_j
 +\sum_{i\in Z}a_i'\ang{i}_j=(n-2)\pi.
\]
Since $a_i'\le1$ and all current angles are positive, it follows that
\[
 \sum_{i=1}^n\ang{i}_j
 =(n-2)\pi+\sum_{i\in Z}(1-a_i')\ang{i}_j
 \ge(n-2)\pi.
\]
This contradicts \eqref{eq:plane-angle-limits} and $a_j>0$.
It proves the fixed-side gap, including all zero-angle degenerations.
\end{proof}

The finite representation is needed because $a_j$ may tend to zero
faster than the error in \eqref{eq:plane-label-balance}. It gives
exact equations for finitely many types in each sufficiently late tiling.

\subsection{A qualitative area gap on closed surfaces}
\label{sec:qualitative-closed}

The area bounds do not require orientability. To see this directly,
let $S$ be nonorientable and let
$p:\widetilde S\to S$ be its orientable double cover, endowed with
the pullback hyperbolic metric. Since each tile is an embedded closed
disc, the covering over it is trivial: its inverse image consists of
two closed discs, each mapped diffeomorphically to the original tile
with the same hyperbolic shape. Thus a tiling by $f$ congruent tiles
lifts to one by $2f$ congruent tiles, with the same tile area, genuine
side count and vertex incidences. Convexity, vertex-properness,
non-edge-to-edge incidences and
congruences that reverse orientation are all preserved. Consequently, both area
assertions of Theorem~\ref{thm:gap} reduce to the orientable case with
exactly the same constants. The surface area and tile count double,
whereas the area of each tile is unchanged.

We now give a qualitative proof of the fixed-side area gap by taking
limits of angle vectors and applying the finite corner count.
The argument uses no global orientation and applies directly to both
orientable and nonorientable surfaces.

\begin{proof}[Qualitative existence of a fixed-side area gap]
Suppose that no positive bound exists for some fixed $n$. Choose
tilings with prototiles $P_j$ whose areas $a_j>0$ tend to zero; the
surfaces may vary with $j$. Label the angles of $P_j$ by
$\ang{1}_j,\ldots,\ang{n}_j$, including equal angles separately.
Gauss--Bonnet gives
\begin{equation}\label{eq:angle-sum}
 \sum_{i=1}^n\ang{i}_j=(n-2)\pi-a_j.
\end{equation}
This formula also holds for simple nonconvex polygons
\cite[arXiv v1, Proposition 3.1]{digiosia}. Since every angle lies in
$(0,2\pi)$, a subsequence satisfies
\begin{equation}\label{eq:angle-limit}
 \ang{i}_j\longrightarrow\ang{i}^*\in[0,2\pi],
 \qquad \sum_{i=1}^n\ang{i}^*=(n-2)\pi.
\end{equation}
Only the angle vectors are taken to a limit; no compactness of the
polygons is required.

Subdivide each tiling at all tile corners, and write $\vertexset_j$ for its
vertex set. At $w\in \vertexset_j$, let $m_{i,j}(w)$ count occurrences of the
$i$th labelled angle, and let $h_j(w)$ count tiles whose side interiors
contain $w$. Then
\begin{equation}\label{eq:local-angles}
 \sum_{i=1}^n m_{i,j}(w)\ang{i}_j+h_j(w)\pi=2\pi.
\end{equation}
At least one genuine corner occurs at $w$, so $h_j(w)\in\{0,1\}$.
Thus the equation includes non-edge-to-edge incidences.

Put $I_+=\{i:\ang{i}^*>0\}$. For all sufficiently large $j$,
$\ang{i}_j\ge\ang{i}^*/2$ for every $i\in I_+$. Equation
\eqref{eq:local-angles} implies
\[
 m_{i,j}(w)\le\frac{4\pi}{\ang{i}^*}
 \qquad(i\in I_+,\ w\in \vertexset_j).
\]
Hence the patterns $((m_{i,j}(w))_{i\in I_+},h_j(w))$ belong to a
fixed finite set. For any such pattern $((m_i),h)$ with
$\sum_{i\in I_+}m_i\ang{i}^*+h\pi>2\pi$, convergence gives
$\sum_{i\in I_+}m_i\ang{i}_j+h\pi>2\pi$ for all sufficiently
large $j$. Such a pattern cannot occur in the tiling, since the omitted
angles contribute nonnegative amounts to \eqref{eq:local-angles}.
There are only finitely many patterns to exclude. Therefore, for all
sufficiently large $j$ and every $w\in \vertexset_j$,
\begin{equation}\label{eq:limit-vertex}
 \sum_{i=1}^n m_{i,j}(w)\ang{i}^*+h_j(w)\pi\le2\pi.
\end{equation}
The multiplicities of angles with zero limit need not be bounded.

Let $f_j$ be the number of tiles and $\halfcount{j}=\sum_{w\in \vertexset_j}h_j(w)$.
Each tile contributes each labelled angle exactly once, so
\begin{equation}\label{eq:corner-count}
 \sum_{w\in \vertexset_j}m_{i,j}(w)=f_j\qquad(1\le i\le n).
\end{equation}
Summing \eqref{eq:local-angles} and \eqref{eq:limit-vertex}, respectively,
and using \eqref{eq:angle-sum} and \eqref{eq:angle-limit}, we obtain
\[
 f_j\bigl((n-2)\pi-a_j\bigr)+\halfcount{j}\pi=2\pi|\vertexset_j|,
 \qquad
 f_j(n-2)\pi+\halfcount{j}\pi\le2\pi|\vertexset_j|.
\]
Subtracting gives $f_ja_j\le0$, a contradiction.
\end{proof}

The finite tiling is used in \eqref{eq:corner-count}. For a tiling of
the plane, the local angle equations alone do not give this global
identity.

\subsection{Large side counts in vertex-proper plane tilings}
\label{sec:large-side-plane}

Local finiteness was established in Section~\ref{sec:local-finite}.

Subdivide every tile side at all genuine corners of other tiles.
An open subdivided edge belongs to exactly two distinct tiles. The
following lemma supplies the planar graph needed for the growth
argument without assuming convexity.

\begin{lemma}[The dual graph]\label{lem:dual-simple}
Suppose that a vertex-proper tiling of $\HH$ consists of congruent
compact simple geodesic $n$-gons. Its subdivided tiling has a
connected, locally finite planar dual whose vertex degrees are at
least $n$ and whose faces are finite simple cycles of length at least
three. The dual has no loops, and if $n\ge6$ it has no multiple edges.
\end{lemma}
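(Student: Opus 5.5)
The plan is to build the dual graph $G$ explicitly and check each property from the local structure of the subdivided tiling. Local finiteness (Section~\ref{sec:local-finite}) gives this structure. Put one dual vertex at an interior point of each tile. For each open subdivided edge, which belongs to exactly two distinct tiles, join the two dual vertices by an arc. The arc runs inside the two tiles and crosses that edge once. Arcs for different edges meet only at dual vertices, so $G$ is planar and loopless: the two tiles are distinct, so an edge never joins a tile to itself. Local finiteness makes each tile meet only finitely many others, so every dual vertex has finite degree. Connectedness follows because $\HH$ is connected and the tiles form a locally finite cover. A path between interior points of two tiles can be perturbed to avoid the discrete vertex set $\mathcal V$, and it then crosses subdivided edges one at a time.

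For the degree bound, each genuine side $\bar i$ of a tile $T$ is subdivided into at least one edge, and each such edge gives a dual edge at $T$. Hence the degree of $T$ is at least the number of subdivided edges on $\partial T$, which is at least $n$.

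For the faces, they correspond to the vertices $w\in\mathcal V$. Around $w$ the incident tiles occur in a cyclic order $T_1,\dots,T_k$, and consecutive tiles share the subdivided edge emanating from $w$ between them. The face of $G$ dual to $w$ is therefore the closed walk $T_1T_2\cdots T_k$. Condition~\eqref{eq:vertex-condition} gives $k\ge3$. Each tile is a simple closed disc, so it meets a small disc around $w$ in a single sector. This is the case even at a reflex corner, or when $w$ lies in a side interior, where the sector is a half-disc. Consequently no tile repeats in the cyclic order, and the walk is a simple cycle of length $k\ge3$. Since every point of $\HH$ is interior to a tile, on an open edge, or in $\mathcal V$, these cycles are all the faces. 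Each is finite by local finiteness.

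The main obstacle is excluding multiple edges when $n\ge6$. Suppose two tiles $T,T'$ share two distinct subdivided edges $e_1,e_2$. Then $T\cap T'$ is a union of arcs in $\partial T$. I would first argue that it is a union of geodesic segments lying on common sides, because both boundaries are geodesic near the open edges. Next, take a Jordan curve through $T$, $e_1$, $T'$, $e_2$. It separates the plane, and the bounded complementary region contains tiles or boundary vertices strictly between the two contacts. Since the curve has length at most $2D$, this region $U$ is compact and is tiled by finitely many tiles. I would then apply a Gauss--Bonnet and Euler count to $U$. Its boundary consists of pieces of $\partial T$ and $\partial T'$ meeting at two turning points. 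Every tile inside $U$ contributes angle defect, and every interior vertex has total angle $2\pi$ with at least three tiles. A short computation should then bound $n$ by five: the enclosed tiles and the turning at the two junctions force total turning of at most about $4\pi$, while each enclosed $n$-gon needs roughly $(n-2)\pi$. An alternative is to use the growth argument on the annulus. The awkward case is when $T\cap T'$ is disconnected with nothing enclosed, meaning $U$ degenerates. There I would use that two distinct geodesic segments of $\partial T$ and $\partial T'$ lying on common geodesics cannot bound an empty region without $T$ and $T'$ overlapping. This would show that $e_1$ and $e_2$ lie on one side and are separated only by vertices where a third tile touches, contradicting simplicity of the face cycles or using $n\ge6$ via the angle count. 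I expect the precise Euler count in this step to require the most care.
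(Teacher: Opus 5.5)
Your construction of the dual and your checks of local finiteness, connectedness, the degree bound, looplessness, and the faces as simple cycles of length at least three agree with the paper's proof. The paper makes ``arcs for different edges meet only at dual vertices'' rigorous by using radial arcs under a homeomorphism of the closed unit disc onto each tile.

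The gap is the exclusion of multiple edges for $n\ge6$, which you do not carry out. Your heuristic, total turning ``about $4\pi$'' against ``roughly $(n-2)\pi$'' per enclosed tile, is not the inequality that actually arises. In curvature $-1$ each enclosed tile has angle sum $(n-2)\pi-a$, and the hole has area $ma$. If you push Gauss--Bonnet through, these area terms cancel and what remains is a purely combinatorial relation between corners and vertices. The contradiction must then come from vertex-properness at three kinds of vertex: interior vertices, vertices of $\partial T$ and $\partial T'$ facing the hole, and the two junctions. None of this bookkeeping appears, and it is the whole content of the step.

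The missing observation is that the dual you have already built carries everything needed. Two dual edges with common endpoints form a Jordan curve. The part of the dual in its closed disc is a finite connected plane graph (edges cannot cross the curve), with $V-E+F=1$ counting interior faces only. Each interior face is a dual face, hence a simple cycle of length at least three, so $3F\le2E-2$ and $E\le3V-5$. Interior vertices keep degree at least $n$, and the two boundary vertices have degree at least two. Hence $n(V-2)+4\le2E\le6V-10$, i.e.\ $(n-6)(V-2)\le-2$, which is impossible for $n\ge6$.

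This argument needs no geodesic structure and no length bound on the curve. The length bound fails anyway for topological arcs through nonconvex tiles, and boundedness of the Jordan domain is automatic. It also absorbs your ``awkward case'': with nothing enclosed, $V=2$ and the interior faces would be $2$-cycles.

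Your treatment of that case is also backwards. If $e_1,e_2$ lie in one component of $T\cap T'$, take a subdivision point $w$ between them. Then $T\cup T'$ covers a neighbourhood of $w$, so no third tile can touch $w$, and $w$ violates \eqref{eq:vertex-condition} directly. When $T\cap T'$ is disconnected, the enclosed region always has nonempty interior.
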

\begin{proof}
Each tile meets only finitely many other tiles, so its boundary has
a finite subdivision. Choose an interior point in each tile and one
point in each open subdivided edge. A homeomorphism from the closed
unit disc onto a tile sends radial segments to disjoint interior arcs
joining its chosen centre to these boundary points. The arcs meet
only at the centre. Joining the two arcs at each shared boundary
point gives one dual edge for each subdivided edge. This construction
uses topological arcs, which need not be geodesic.

The embedding is locally finite: every dual edge lies in its two
incident compact tiles, and a compact set meets only finitely many
tiles and finitely many arcs in each. It is connected because a path
between tile interiors can be perturbed to avoid tiling vertices and
crosses only finitely many shared edges. Its faces are the discs
around the original tiling vertices. An embedded polygon occupies
one angular sector at such a vertex, so the cyclic list of incident
tiles has no repetitions. The corresponding dual face is therefore
a finite simple cycle, with at least three edges by vertex-properness.
A tile has at least $n$ subdivided sides,
so the degree of its dual vertex, initially counting possible
multiple edges, is at least $n$. There are no loops since opposite
sides of a shared edge belong to different tiles.

Suppose that two dual edges have the same endpoints. Their union is
a Jordan curve with two boundary vertices and two boundary edges.
Its bounded closed disc contains a finite part of the dual embedding.
Let $V_\Delta,E_\Delta,F_\Delta$ count its vertices, edges and interior
faces, including the two boundary vertices and edges. This part is
connected: any component in the interior must connect to the boundary
in the full connected graph, and an edge cannot cross the Jordan
curve. The dual faces fill the disc, so
\[
 V_\Delta-E_\Delta+F_\Delta=1,
 \qquad 3F_\Delta\le2E_\Delta-2.
\]
Consequently $E_\Delta\le3V_\Delta-5$. Every interior vertex retains
all its incident edges and has degree at least $n$; the two boundary
vertices each have degree at least two in the disc. Thus
\[
 n(V_\Delta-2)+4\le2E_\Delta\le6V_\Delta-10,
 \qquad (n-6)(V_\Delta-2)\le-2.
\]
This is impossible when $n\ge6$, since $V_\Delta\ge2$. Hence the
dual is simple in that range. In particular, repeated neighbours
cannot reduce its degree below $n$.
\end{proof}

\begin{proposition}[Diameter and side count]\label{prop:plane-graph-growth}
A compact simple geodesic $n$-gon $P$ that is a vertex-proper
prototile for $\HH$, with $n\ge7$, satisfies
\begin{equation}\label{eq:plane-large-n-bound}
 D\ge\log\frac{n-3}{3},\qquad D=\diam(P).
\end{equation}
\end{proposition}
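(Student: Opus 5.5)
We use the dual graph of Lemma~\ref{lem:dual-simple}. Since $n\ge7$, it is a connected, locally finite, simple planar graph. Every vertex has degree at least $n$, and every face is a simple cycle of length at least three. The idea is to compare two counts. Combinatorially, the number of tiles within graph distance $k$ of a fixed tile $T_0$ grows exponentially with base about $n-3$. Geometrically, every tile at graph distance $k$ lies in the ball of radius $(k+1)D$ around a point of $T_0$. Area then bounds the number of such tiles by $\ballarea((k+1)D)/a$, which grows like $e^{kD}$. Letting $k\to\infty$ gives $e^{D}\ge (n-3)/3$, which is the claimed bound~\eqref{eq:plane-large-n-bound}.

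\emph{Combinatorial step.} Let $S_k$ be the sphere of radius $k$ in the dual graph about $T_0$, and $B_k$ the ball. The target is an inequality of the form $|S_{k+1}|\ge (n-4)|S_k|-|S_{k-1}|$, or at least $|B_k|\ge c\,\rho^k$ for some $\rho$ close to $(n-3)/3$. In fact any constant times $\rho^k$ suffices, because only the growth rate survives in the limit.

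Here is the cleanest route I would try. Apply Euler's formula to the finite planar graph spanned by $B_k$. This graph is simple, so its edges number at most $3V-6$. Every interior vertex (one in $B_{k-1}$) has full degree $\ge n$, so
\[ n|B_{k-1}|\le 2E\le 6|B_k|. \]
This gives $|B_k|\ge (n/6)|B_{k-1}|$. That is exponential growth with base $n/6$, but the base is too weak. To reach $(n-3)/3$, I would refine the count by edge types. Each vertex in $S_k$ has neighbours only in $S_{k-1}\cup S_k\cup S_{k+1}$. Because faces are cycles around tiling vertices, a vertex of $S_k$ has at most two neighbours in $S_k$ and a bounded number of back-edges, in a planar layered sense. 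This is the standard argument for planar graphs with minimum degree $n$ and face sizes at least three: outward degree $\ge n-3$ on average. Averaging over the layer gives $|S_{k+1}|\ge$ roughly $((n-3)/3)|S_k|$, with the factor $3$ absorbing the sharing of outer neighbours. I would carry this out by a double count of edges between consecutive layers. Planarity and Euler's formula, applied to the subgraph between $S_k$ and $S_{k+1}$, bound the edges inside $S_{k+1}$ and the edges shared.

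\emph{Geometric step.} Each tile has area $a$, and all tiles in $B_k$ lie in $B(x_0,(k+1)D)$. This gives
\[ |B_k|\,a\le \ballarea((k+1)D)\le \pi e^{(k+1)D}. \]
Combining this with $|B_k|\ge c\rho^k$ yields $\rho\le e^D$ as $k\to\infty$.

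\emph{Main obstacle.} The hard part is the layer-growth inequality with the sharp constant $(n-3)/3$. I expect to handle it by a direct Euler-characteristic count on the annular region between consecutive layers, using that the faces are triangles or larger and that the graph is simple. If the exact constant proves elusive, the $n/6$ estimate at least gives a bound of the same logarithmic form.
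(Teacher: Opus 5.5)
Your overall strategy is the paper's: growth of graph balls in the simple dual of Lemma~\ref{lem:dual-simple}, compared with the area bound $a|B_k|\le 2\pi(\cosh((k+1)D)-1)$. Your geometric step is correct. The gap is the combinatorial step, which carries the whole constant and which you do not prove. The layered argument is only described. The claims that a vertex of $S_k$ has at most two neighbours in $S_k$ and boundedly many back-edges are asserted without proof; they are nontrivial structural statements about distance layers and do not follow from anything you establish. The double count meant to produce $(n-3)/3$ is never carried out. Your fallback does not rescue the statement either. The bound $|B_k|\ge (n/6)|B_{k-1}|$ gives only $D\ge\log(n/6)$, and $n/6<(n-3)/3$ for every $n\ge7$ (for $n=7$, $7/6<4/3$). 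As written, the proposal proves a strictly weaker proposition.

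The missing idea is a one-line sharpening of your own Euler count, with no layer analysis. Write $e(X)$ for the number of edges with both ends in $X$. Every edge at a vertex of $B_k$ ends in $B_{k+1}$. The edges inside $B_k$, together with those from $B_k$ to $B_{k+1}\setminus B_k$, all lie inside $B_{k+1}$. Hence
\[
 n|B_k|\le\sum_{w\in B_k}\deg w
 =2e(B_k)+e(B_k,B_{k+1}\setminus B_k)
 \le e(B_k)+e(B_{k+1})\le 3|B_k|+3|B_{k+1}|-12,
\]
using $e\le 3v-6$ twice, which is legitimate since $|B_1|\ge n+1\ge 8$. You lost the constant by bounding the middle expression by $2e(B_{k+1})$ instead of $e(B_k)+e(B_{k+1})$; the sharper form lets the $3|B_k|$ term move to the left. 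This gives $|B_{k+1}|\ge\frac{n-3}{3}|B_k|+4$, hence $|B_k|\ge|B_1|\bigl(\frac{n-3}{3}\bigr)^{k-1}$. Your geometric step then yields exactly $D\ge\log\frac{n-3}{3}$, and this is the paper's proof.
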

\begin{proof}
Choose a vertex-proper monohedral tiling by $P$ and use its simple
planar dual from Lemma~\ref{lem:dual-simple} as the adjacency graph.
Fix a tile $T_0$. Let $S_k$ be its vertices at
graph distance at most $k$ from $T_0$, and put $s_k=|S_k|$.
Write $e(S)$ for the number of edges with both endpoints in $S$.
Every edge incident to $S_k$ has its other endpoint in $S_{k+1}$.
Counting internal edges twice gives, for $k\ge1$,
\begin{align}
 n s_k
 &\le\sum_{w\in S_k}\deg(w)\notag\\
 &=2e(S_k)+e(S_k,S_{k+1}\setminus S_k)\notag\\
 &\le e(S_k)+e(S_{k+1})
 \le3s_k+3s_{k+1}-12.\label{eq:plane-graph-recurrence}
\end{align}
The last inequality uses $e\le3v-6$ for finite simple planar graphs;
it is applicable because $s_k\ge s_1\ge n+1\ge8$.
With $b=(n-3)/3>1$, this yields
\[
 s_{k+1}\ge b s_k+4,\qquad s_k\ge s_1 b^{k-1}.
\]

Choose $x\in T_0$. A chain of at most $k$ adjacent tiles, each of
diameter $D$, shows that every tile in $S_k$ is contained in
$B(x,(k+1)D)$. The tiles have common area $a>0$ and disjoint interiors,
so
\begin{equation}\label{eq:plane-geometric-growth}
 a s_k\le \ballarea((k+1)D),\qquad \ballarea(r)=2\pi(\cosh r-1).
\end{equation}
Take logarithms, divide by $k$, and let $k\to\infty$.
The lower bound on $s_k$ and the hyperbolic ball growth give
$\log b\le D$.
\end{proof}

Thus the diameters of vertex-proper geodesic prototiles tend to infinity
with their genuine side counts, even when reflex angles occur. The proof
uses only the planar edge bound and hyperbolic ball growth, and requires
no uniform lower bound on the angles of different prototiles.

\subsection{Large side counts on closed surfaces}\label{sec:vertices}

Define the \emph{corner degree} $d(w)$ to be the number of genuine
tile corners at $w$, excluding the flat incidence at a half vertex.
Put $h(w)=0$ at full vertices and $h(w)=1$ at half vertices.
The degree in the subdivided
tiling graph is $d(w)+h(w)$. Since each embedded tile contributes
one sector at $w$, this is also the number of distinct incident tiles.
The vertex-properness condition \eqref{eq:vertex-condition} is
therefore equivalent to $d(w)\ge3$ at full vertices and $d(w)\ge2$
at half vertices.

For edge-to-edge tilings with vertex degrees at least three, Li,
Wang, Wu and Yan \cite[arXiv v2, Lemma 2]{lisingle} prove the bound
$f\le-6\chi/(n-6)$ for $n\ge7$; see also their counting identities
\cite[arXiv v1, Section 5]{limultiple}. The following count includes
half vertices.

\begin{proposition}\label{prop:vertices}
Suppose a closed hyperbolic surface has a vertex-proper tiling by
$f$ congruent simple geodesic $n$-gons.
If $v_{\mathrm{half}}$ is the number of half vertices, then the tile area satisfies
\begin{equation}\label{eq:euler-area}
 \Area(P)\ge\frac\pi3\left(n-6+\frac{v_{\mathrm{half}}}{f}\right)
 \ge\frac\pi3(n-6).
\end{equation}
\end{proposition}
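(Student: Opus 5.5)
We combine Euler's formula for the subdivided tiling with the Gauss--Bonnet angle count. Subdivide at all genuine corners, and let $v,e,f$ be the numbers of vertices, edges and tiles, with $v=v_{\mathrm{full}}+v_{\mathrm{half}}$. Summing angles over vertices, each full vertex contributes $2\pi$ and each half vertex contributes $\pi$ of genuine corner angle. Each tile contributes $(n-2)\pi-a$ by Gauss--Bonnet. Hence
\[
 f\bigl((n-2)\pi-a\bigr)=2\pi v_{\mathrm{full}}+\pi v_{\mathrm{half}}
 =2\pi v-\pi v_{\mathrm{half}}.
\]
Together with $fa=\Area(S)=-2\pi\chi=2\pi(e-v-f)$, it suffices to bound $e$ and $v$ in terms of $f$, $n$ and $v_{\mathrm{half}}$.

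\emph{Edge count.} Each subdivided edge lies on exactly two tile sides. Each tile boundary has $n$ genuine corners plus one extra subdivision point for each half vertex lying in its side interior. Summing over tiles gives
\[
 2e=nf+v_{\mathrm{half}}.
\]

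\emph{Vertex count.} Here I use vertex-properness in the form stated just before the proposition: $d(w)\ge3$ at full vertices and $d(w)\ge2$ at half vertices. The graph degree is $d(w)+h(w)$, so every vertex has graph degree at least $3$. Therefore $2e=\sum_w(d(w)+h(w))\ge3v$, that is, $v\le 2e/3$.

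\emph{Combination.} Substituting into the Euler expression gives
\[
 fa=2\pi(e-v-f)\ge2\pi\Bigl(\frac e3-f\Bigr)
 =2\pi\Bigl(\frac{nf+v_{\mathrm{half}}}{6}-f\Bigr)
 =\frac{\pi}{3}\bigl((n-6)f+v_{\mathrm{half}}\bigr).
\]
Dividing by $f$ yields \eqref{eq:euler-area}. The second inequality is immediate since $v_{\mathrm{half}}\ge0$. This route does not even need the angle identity, though the angle identity can serve as a consistency check.

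\emph{Main obstacle.} The delicate step is justifying the edge count $2e=nf+v_{\mathrm{half}}$ and the degree bound in the non-edge-to-edge setting. Three facts are needed. First, every half vertex lies in the side interior of exactly one tile, and this adds exactly one boundary vertex to that tile. Second, tile corners at reflex angles still count once. Third, each open subdivided edge borders two distinct tiles, so $e$ is well defined on the surface and there are no loops. These follow as in Lemma~\ref{lem:dual-simple}, using the embedded-disc hypothesis, which gives each tile exactly one sector at each vertex. Nonorientable surfaces cause no difficulty, since Euler's formula $v-e+f=\chi$ holds for any cellular decomposition.
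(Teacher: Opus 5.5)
Your proof is correct and is essentially the paper's argument. You use the same edge count $2e=nf+v_{\mathrm{half}}$, the same bound $3v\le 2e$ from vertex-properness, and Euler's formula with $fa=-2\pi\chi$; the paper obtains $3v\le2e$ via $nf\ge3v_{\mathrm{full}}+2v_{\mathrm{half}}$ and notes your graph-degree version as equivalent, and the opening Gauss--Bonnet angle identity is indeed not needed.
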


\begin{proof}
Let $v_{\mathrm{full}}$ and $v_{\mathrm{half}}$ be the numbers of full and half vertices, and let
$v=v_{\mathrm{full}}+v_{\mathrm{half}}$ and $e$ be the vertex and edge counts after subdivision at all
corners. Each tile contributes $n$ genuine corners. Vertex-properness
therefore gives
\begin{equation}\label{eq:full-half-corners}
 nf=\sum_{w\ \mathrm{full}}d(w)+\sum_{w\ \mathrm{half}}d(w)
 \ge3v_{\mathrm{full}}+2v_{\mathrm{half}}.
\end{equation}
Each half vertex adds one subdivision point on the boundary of the
tile passing through it. Thus the total number of subdivided tile
sides is $nf+v_{\mathrm{half}}$. Every graph edge belongs to two tile boundaries, so
\begin{equation}\label{eq:full-half-edges}
 2e=nf+v_{\mathrm{half}}.
\end{equation}
Equivalently, full vertices have graph degree $d(w)\ge3$, and half
vertices have graph degree $d(w)+1\ge3$. Combining
\eqref{eq:full-half-corners} and \eqref{eq:full-half-edges} gives
\[
 3v=3v_{\mathrm{full}}+3v_{\mathrm{half}}\le nf+v_{\mathrm{half}}=2e.
\]
Euler's formula yields
\begin{align}
 \chi=v-e+f
 &\le f-\frac e3
  =f-\frac{nf+v_{\mathrm{half}}}{6}\notag\\
 &=\frac{(6-n)f-v_{\mathrm{half}}}{6}.
 \label{eq:full-half-euler}
\end{align}
Since $f\Area(P)=-2\pi\chi$, multiplication by $-2\pi/f$ gives
\eqref{eq:euler-area}.
\end{proof}

Equality in the first inequality of \eqref{eq:euler-area} holds
exactly when every full vertex has corner degree three and every half
vertex has corner degree two. Equality in the bound
$\Area(P)\ge\pi(n-6)/3$ also requires $v_{\mathrm{half}}=0$. For $n\ge7$,
\eqref{eq:full-half-euler} gives
\[
 (n-6)f+v_{\mathrm{half}}\le-6\chi,\qquad
 f\le\frac{-6\chi-v_{\mathrm{half}}}{n-6}\le\frac{-6\chi}{n-6}.
\]
When $v_{\mathrm{half}}=0$, this recovers the upper bound and equality condition in
\cite[arXiv v2, Lemma 2]{lisingle}.

For convex tiles, the Euler area estimate also appears in
\cite[arXiv v1, Proposition 4.4]{hirsch}.

\subsection{Uniform gaps for vertex-proper tilings}
\label{sec:qualitative-uniform}

We now combine the fixed-side qualitative gaps with the preceding
large-side estimates. By Proposition~\ref{prop:plane-graph-growth}, a
sequence of vertex-proper plane prototiles with diameters tending to
zero has bounded side counts. A subsequence has one fixed side count,
contradicting Proposition~\ref{prop:plane-fixed-n-qualitative}. This
proves the uniform plane diameter gap.

On a closed surface, Proposition~\ref{prop:vertices} gives
$a\ge\pi(n-6)/3$. Hence a sequence of vertex-proper tilings with
tile areas tending to zero eventually has $n\le6$. A fixed-side
subsequence contradicts the qualitative area gap from
Section~\ref{sec:qualitative-closed}. This proves the uniform
closed-surface area gap and completes the qualitative proofs of
both main theorems.

\subsection{The integer gap and well-quasi-orders}
\label{sec:qualitative-integer}

The two quantitative arguments use the following integer-gap principle.
We first establish its existence; Section~\ref{sec:integer} gives
explicit constants.

\begin{lemma}[Qualitative integer gap]\label{lem:qualitative-gap}
For each positive integer $k$ there is $\eta_k>0$ with the following
property. Let $T$ be a finite matrix with nonnegative integer entries,
let every entry of $b$ be $1$ or $2$, and suppose
$Q=\{y\in\RR_{\ge0}^k:Ty\le b\}$ is bounded.
If $\mu=\max_{y\in Q}\sum_i y_i<\ell$ for a positive integer $\ell$,
then $\ell-\mu\ge\eta_k$.
\end{lemma}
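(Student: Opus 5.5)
The plan is to argue by contradiction with a well-quasi-order (Dickson's lemma), after reducing to systems with a bounded number of constraints.

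\emph{Step 1: reduce to at most $k$ rows.} Fix $T,b$ as in the statement. Since $Q$ is bounded and nonempty ($0\in Q$), the maximum $\mu$ is finite and attained. By linear programming duality,
\[
\mu=\min\{b\cdot z:\ T^{\mathsf T}z\ge\mathbf 1,\ z\ge0\}.
\]
This dual has a basic optimal solution $z$ with at most $k$ positive entries, since it has $k$ non-sign constraints. Let $T_B,b_B$ be the corresponding rows. The same $z$ is dual feasible for the relaxed problem $\max\{\sum y_i: y\ge0,\ T_By\le b_B\}$, so that maximum is at most $\mu$. It is also at least $\mu$, because the relaxed region contains $Q$. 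Hence $\mu$ equals the optimum for a system with at most $k$ rows; padding with repeated rows, we may take exactly $k$. This subsystem need not define a bounded region, but its optimum is finite, which is all we use. The main point to check carefully is that this reduction preserves $\mu$ exactly, not just up to an inequality.

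\emph{Step 2: bound $\ell$.} Boundedness of $Q$ forces every column of $T$ to have a positive (hence $\ge1$) entry in some row. Otherwise $y_i$ could grow without bound. That row gives $y_i\le2$ on $Q$, so $\mu\le2k$. Thus the only relevant $\ell$ are the finitely many integers with $\mu<\ell$ and $\ell-\mu$ small, namely $\ell\le 2k+1$. For $\ell>2k$ the gap is already at least $\ell-2k\ge1$. So it suffices to treat each fixed $\ell\le2k$ and each of the finitely many right-hand sides $b_B\in\{1,2\}^k$.

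\emph{Step 3: Dickson's lemma.} Suppose no $\eta_k$ exists. By Step 2 there are fixed $\ell$ and fixed $b_B$, and $k\times k$ matrices $T_j$ with nonnegative integer entries, whose optima $\mu_j$ satisfy $\mu_j<\ell$ and $\mu_j\to\ell$. Passing to a subsequence, we may assume $\mu_j$ is strictly increasing. Now view $T_j\in\ZZ_{\ge0}^{k^2}$. Under the entrywise order this set is a well-quasi-order by Dickson's lemma, so a further subsequence is entrywise nondecreasing: $T_j\le T_{j'}$ for $j<j'$. Because $y\ge0$, we have $T_{j'}y\le b_B$ implies $T_jy\le b_B$. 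So the feasible regions are nested decreasingly, and $\mu_{j'}\le\mu_j$. This contradicts strict increase, so the gaps $\ell-\mu$ are bounded below by a positive constant depending only on $k$. Taking the minimum over the finitely many pairs $(\ell,b_B)$ gives $\eta_k$.

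I expect Step 1 to be the main obstacle. The argument needs the LP over the selected rows to have the same optimum, not merely an upper bound, and this comes from complementary slackness and basic dual solutions. Once the number of rows is bounded by $k$, the well-quasi-order step is routine.
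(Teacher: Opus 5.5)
Your proof is correct, and the overall architecture matches the paper's: $\mu\le2k$, monotonicity of the maximum under coordinatewise domination of the constraint data, and a well-quasi-order argument ruling out strictly increasing maxima. The difference is in how you handle an unbounded number of constraints.

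The paper normalises every row to right-hand side $2$ and regards a system as a finite set $S\subset\ZZ_{\ge0}^k$ of coefficient vectors. It then uses the domination preorder on finite sets: $S\preceq T$ if each $s\in S$ is dominated coordinatewise by some $t\in T$. This preorder is a well-quasi-order as a consequence of Higman's lemma. Since $S\preceq T$ gives $Q(T)\subseteq Q(S)$, arbitrarily many rows are handled directly and no duality is needed.

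You instead first cut the system down to at most $k$ rows via a basic optimal dual solution. This is the same complementary-slackness device the paper uses in Lemma~\ref{lem:cover-packing}. After that you only need Dickson's lemma on $\ZZ_{\ge0}^{k\times k}$ with a fixed right-hand side. So you trade the finite-set well-quasi-order for LP duality plus the more elementary Dickson lemma, which is provable by iterated extraction of monotone subsequences. Your reduction also makes explicit that only $k$ rows carry the optimum, in the spirit of the $k$ active rows used in the explicit proof of Lemma~\ref{lem:gap}.

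Two small remarks:
\begin{enumerate}
\item The reduced system is automatically bounded, so your caveat is unnecessary. Indeed, $T_B^{\mathsf T}z_B\ge\mathbf 1$ with $z_B>0$ forces every column of $T_B$ to have a positive integer entry in some selected row, hence every coordinate is at most $2$.
\item Doubling the rows with right-hand side $1$, as the paper does, would spare you the pigeonhole over $b_B\in\{1,2\}^k$.
\end{enumerate}
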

\begin{proof}
First normalise every inequality $a\cdot x\le b$ by writing it as
$(2a/b)\cdot x\le2$. Because $b$ is either $1$ or $2$, the new
coefficient vector still has nonnegative integer entries. Let $S$
be the finite set of these coefficient vectors, and write
\[
 Q(S)=\{x\in\RR_{\ge0}^k:s\cdot x\le2\text{ for every }s\in S\}.
\]
Define $S\preceq T$ to mean that every vector in $S$ is bounded
coordinate by coordinate by some vector in $T$. A consequence of
Higman's lemma \cite{higman} is that, in every infinite sequence
$S_1,S_2,\ldots$, there are indices $p<q$ with $S_p\preceq S_q$;
see \cite[Section 2]{abriola}. This is the well-quasi-order property
needed here.

To see its effect on our inequalities, suppose $S\preceq T$ and
$x\in Q(T)$. Given $s\in S$, choose $t\in T$ with $s_i\le t_i$
for every $i$. Since $x_i\ge0$, we have
$s\cdot x\le t\cdot x\le2$. Thus $x\in Q(S)$, and consequently
\[
 Q(T)\subseteq Q(S),\qquad \mu(T)\le\mu(S),
\]
where $\mu(S)$ is the maximum coordinate sum on the bounded set
$Q(S)$.

If these maxima had an infinite strictly increasing sequence, the
well-quasi-order property would give $p<q$ with $S_p\preceq S_q$,
forcing $\mu(S_q)\le\mu(S_p)$, a contradiction. In particular, the
maxima cannot approach a fixed integer from below: such an approach
would contain a strictly increasing subsequence. Finally, boundedness
forces each coordinate to occur with a positive integer coefficient
in some inequality, so each coordinate is at most $2$ and
$\mu\le2k$. Only the integers $1,\ldots,2k$ need be considered;
larger integers are separated from $\mu$ by at least $1$.
This proves the existence of a gap depending only on $k$.
\end{proof}

\section{An explicit integer gap}
\label{sec:integer}

We now make Lemma~\ref{lem:qualitative-gap} quantitative.
The surface and plane arguments lead to finite systems of inequalities with
nonnegative integer coefficients and right-hand sides $1$ or $2$.
The integers count occurrences of labelled corners; the angle values
themselves need not be rational. We bound the distance between the
maximum coordinate sum and a larger positive integer.

Define
\begin{equation}\label{eq:denominators}
 B_0=1,\qquad B_{j+1}=B_j(2B_j+1),\qquad
 \varepsilon_n=\frac1{B_{\lceil n/2\rceil+1}}.
\end{equation}
The first terms of $B_j$ are $1,3,21,903,1631721$.

\subsection{A gap estimate for integer inequalities}

The bound depends on the number of variables and on the integer target.

\begin{lemma}[Integer gap estimate]\label{lem:gap}
Let $k\ge1$ and let $Q=\{y\in\RR_{\ge0}^k:Ty\le b\}$ be bounded.
Assume that $T$ is a finite matrix of nonnegative integers and that
each entry of $b$ is $1$ or $2$. Put $\mu=\max_Q\sum_i y_i$.
For every positive integer $\ell$ with $\mu<\ell$,
\begin{equation}\label{eq:sharp-gap}
 \ell-\mu\ge G(k,\ell):=
 \begin{cases}
 B_{k-\lfloor\ell/2\rfloor}^{-1},&1\le\ell\le2k,\\
 \ell-2k,&\ell>2k.
 \end{cases}
\end{equation}
For each pair $(k,\ell)$, equality occurs for a diagonal system.
\end{lemma}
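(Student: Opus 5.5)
The plan is to prove a stronger statement by induction on $k$, in which the right-hand sides and the target are allowed to be rational with a controlled denominator. Equality will come from an explicit greedy diagonal system. First I normalise as in the proof of Lemma~\ref{lem:qualitative-gap}, so that every constraint reads $s\cdot y\le 2$ with $s\in\ZZ_{\ge0}^k$. Boundedness forces every coordinate to appear with a positive coefficient, so $y_i\le2$ and $\mu\le2k$. This settles the case $\ell>2k$, where $\ell-\mu\ge\ell-2k$.

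\emph{Sharpness.} The identity
\[
\frac1{B_j}-\frac{2}{2B_j+1}=\frac1{B_{j+1}}
\]
is exactly the recursion \eqref{eq:denominators}, and it suggests the diagonal extremal systems. Write $\ell=2m$ or $\ell=2m+1$. For $\ell$ odd, take $m$ coordinates with $y_i\le2$, leaving a deficit $1=1/B_0$. For $\ell$ even, take $m-1$ coordinates with $y_i\le2$ and one with $y_i\le1$, again leaving deficit $1/B_0$. In both cases $k-m$ coordinates remain, and I assign to them successively the constraints $(2B_j+1)y\le2$ for $j=0,\dots,k-m-1$. Each step reduces the deficit from $1/B_j$ to $1/B_{j+1}$. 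The final deficit is $1/B_{k-m}=1/B_{k-\lfloor\ell/2\rfloor}$, which shows equality is attained.

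\emph{Lower bound.} The natural induction is by slicing. Let $y^*$ be an optimal vertex of $Q$ and let $y_k^*$ be its last coordinate. Fixing $y_k=y_k^*$ gives a $(k-1)$-variable system whose right-hand sides are $2-s_ky_k^*$ rather than $1$ or $2$. I therefore plan to prove the following generalised claim.

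\begin{quote}
If all right-hand sides lie in $[0,2]$ and are rationals of the form $p/B$, and the target is $\ell-\rho$ with $\rho\in\{1/B\}\cup\{1,2,\dots\}$, then the gap below the target is at least $1/(B(2B+1))$ per remaining variable step. The deficit $1/B$ then propagates as $B_j\mapsto B_{j+1}$.
\end{quote}

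The greedy heuristic behind this is Egyptian-fraction maximality (Curtiss, Nathanson, Ambro--Barc\u{a}u). The coordinate with the largest value $y_i^*$ is pinned by a single tight constraint of the form $t y_i\le r$. Its best contribution below a deficit $1/B$ is at most $2/(2B+1)$ whenever it is strictly less than $1/B$, since among fractions $2/t$ and $1/t$ lying below $1/B$ the largest is $2/(2B+1)$.

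\emph{Main obstacle.} The hardest step is the coupled case. There $y^*$ is determined by a genuinely non-diagonal $k\times k$ integer system, and single coordinates are not individually pinned. My first attempt would use the dual: $\mu=\min\{b\cdot z: z\ge0,\ T^{\mathsf T}z\ge\mathbf 1\}$. I would look at the tight constraints of an optimal dual vertex and show that one can replace the system by a diagonal one whose maximum is at least $\mu$ and still below $\ell$. The replacement would use the monotonicity $Q(T)\subseteq Q(S)$ for $S\preceq T$ from Lemma~\ref{lem:qualitative-gap}, choosing for each coordinate the smallest integer coefficient $t_i$ with $t_i y_i^*\le b$ in some row. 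Such a reduction would give $\mu\le\sum_i b_i/t_i$. The difficulty is that this diagonal majorant may overshoot $\ell$.

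If the diagonal reduction fails, I would fall back on induction by deleting a coordinate. Among coordinates, choose $i$ with the largest positive coefficient interaction and bound $\mu\le y_i^*+\mu'$, where $\mu'$ is the optimum for the system with $y_i$ removed and the right-hand sides reduced. The crux is then to check that the denominators of the reduced right-hand sides grow no faster than $B\mapsto B(2B+1)$.
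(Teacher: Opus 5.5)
\textbf{There is a genuine gap: the coupled case is not proved.} Your normalisation, the case $\ell>2k$, and the greedy diagonal equality systems are correct; they agree with parts (I) and (VI) of the paper's proof. The missing piece is the lower bound for coupled constraints, which is the whole content of the lemma, and neither of your routes reaches it.

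\emph{The diagonal majorant fails}, as you feared. For the single row $y_1+y_2\le2$ and $\ell=3$ one has $\mu=2$, yet every diagonal system $S\preceq T$ has maximum at least $4>\ell$, so it gives no information.

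\emph{The slicing induction fails for a structural reason.} Fixing a nonzero vertex coordinate $y_k^*$ yields right-hand sides $2-s_ky_k^*$ whose denominators come from $|\det C|$ of the active system. Since the entries of $T$ are unbounded, these denominators are not controlled by any recursion $B\mapsto B(2B+1)$. For example, with rows $(N,1)$ and $(1,N)$ the optimal vertex is $\tfrac2{N+1}(1,1)$, so one slice already produces the right-hand side $2N/(N+1)$. The crux you isolate is therefore false, and denominators alone cannot give a bound depending only on $k$ and $\ell$. Likewise, the claim that the largest coordinate is pinned by a single tight row holds only for diagonal systems.

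\textbf{The missing idea is to trade the size of the determinant against the size of the vertex.} The paper inducts on $k$ with $\ell$ fixed. It lowers the dimension by \emph{zeroing} the $j$ smallest coordinates of a maximising vertex $y_1\le\cdots\le y_k$, which keeps the system inside the lemma's class.

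The induction hypothesis then gives $\sum_{i\le j}y_i\ge G(k-j,\ell)-\delta$, where $\delta=\ell-\mu$. When $\ell<2k$ one may assume $\delta<\eta:=G(k-1,\ell)$, since otherwise the bound is immediate. Then all $y_i$ are positive, so the active system is $Cy=2\mathbf 1$ with no coordinate rows.

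Three estimates follow:
\begin{itemize}
\item Cramer's rule gives $\delta\ge p/|\det C|$, with $p=\gcd(\ell,2)$.
\item Scaling column $i$ of $C$ by $y_i$ gives a nonnegative matrix with row sums $2$, so $|\det C|\prod_iy_i\le2^k$.
\item Compare $y$ with the vector $z$ whose prefix sums equal the lower bounds above, using $\log y_i-\log z_i\ge(y_i-z_i)/y_i$ and Abel summation. This gives $\prod_iy_i\ge2^{k-1}\eta(\eta-\delta)/p$.
\end{itemize}
Hence
\[
\delta\ge\frac p{|\det C|}\ge\frac p{2^k}\prod_iy_i\ge\frac{\eta(\eta-\delta)}2,
\qquad\text{so}\qquad
\delta\ge\frac{\eta^2}{2+\eta}=\frac1{B_{k-\lfloor\ell/2\rfloor}}.
\]
Some such link between the denominator and the product of the coordinates is what your plan lacks for the coupled case.
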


\begin{proof}
We first prove the inequality and then construct equality examples.

\medskip\noindent
\emph{(I) Normalisation.}
Double every row whose right side is $1$. All right sides are now $2$,
and all coefficients are still nonnegative integers. Boundedness
forces each coordinate to occur in some row with coefficient at least
$1$. Hence $Q\subseteq[0,2]^k$.

If $\ell>2k$, then $\ell-\mu\ge\ell-2k$ immediately. If $\ell=2k$,
suppose that $\mu>2k-1$. Every coordinate of a maximiser must exceed
$1$: otherwise its total would be at most $1+2(k-1)=2k-1$.
A nonzero constraint row must therefore have just one positive entry,
equal to $1$. Two positive entries, or one entry at least $2$, would
make its left side exceed $2$. Boundedness then forces precisely the
coordinate bounds $y_i\le2$, so $Q=[0,2]^k$ and $\mu=2k$.
This contradicts $\mu<\ell$. Thus the gap for $\ell=2k$ is at least
$1=B_0^{-1}$.

\medskip\noindent
\emph{(II) Coordinate faces.}
Write $E_j=1/B_j$. The recurrence in \eqref{eq:denominators} gives
\begin{equation}\label{eq:gap-recurrence}
 E_0=1,\qquad E_{j+1}=\frac{E_j^2}{2+E_j},\qquad
 E_j-E_{j+1}=\frac{2E_{j+1}}{E_j}.
\end{equation}
For a fixed $\ell$, we use induction on $k$. In dimension zero the
maximum is $0$, so set $G(0,\ell)=\ell$. Part (I) treats
$\ell\ge2k$. Suppose $\ell<2k$, and put
\[
 s=\lfloor\ell/2\rfloor,\qquad r=k-s\ge1,\qquad
 g_j=G(j,\ell)\quad(0\le j<k).
\]
Then $g_{k-1}=E_{r-1}=:\eta$. Set $\delta=\ell-\mu>0$.
If $\delta\ge \eta$, the conclusion follows from $\eta>E_r$.
We may therefore assume $\delta<\eta$.

Choose a maximising vertex of $Q$ and relabel its coordinates so that
$y_1\le\cdots\le y_k$. Such a vertex exists because $Q$ is a nonempty
bounded polytope. For $1\le j<k$, set its first $j$ coordinates to
zero. Nonnegative coefficients ensure that the resulting point is
still feasible. Deleting these zero coordinates gives a bounded
problem in $k-j$ variables. Extending a feasible point by zero in
the deleted coordinates gives a point of $Q$, so this smaller problem
has maximum at most $\mu<\ell$. Its maximum is therefore at most
$\ell-g_{k-j}$ by induction. Therefore
\begin{equation}\label{eq:prefix-bounds}
 \sum_{i=1}^j y_i\ge g_{k-j}-\delta\quad(1\le j<k),
 \qquad \sum_{i=1}^k y_i=\ell-\delta.
\end{equation}
In particular $y_1\ge \eta-\delta>0$ when $k>1$.
For $k=1$, we have $\ell=1$ and $y_1=1-\delta>0$.
All coordinates of the maximising vertex are thus positive.

\medskip\noindent
\emph{(III) Product comparison.}
Define a positive vector
\[
 z=(g_{k-1}-\delta,\ g_{k-2}-g_{k-1},\ldots,g_0-g_1).
\]
For $k=1$ this means $z=(\ell-\delta)$. The quantities $g_j$ strictly
decrease with $j$, so all the displayed differences are positive.
The first $j$ entries of $z$ sum to $g_{k-j}-\delta$, and its total
is $\ell-\delta$. Thus, if
$S_j=\sum_{i=1}^j(y_i-z_i)$, then \eqref{eq:prefix-bounds} says
$S_j\ge0$ for $j<k$ and $S_k=0$.

We claim that
\begin{equation}\label{eq:product-comparison}
 \prod_{i=1}^k y_i\ge\prod_{i=1}^k z_i.
\end{equation}
The tangent-line inequality for the concave function $\log$ gives
$\log y_i-\log z_i\ge(y_i-z_i)/y_i$. Summing and grouping successive
prefix sums yields
\[
 \sum_i(\log y_i-\log z_i)
 \ge\sum_i\frac{y_i-z_i}{y_i}
 =\sum_{j=1}^{k-1}
   \left(\frac1{y_j}-\frac1{y_{j+1}}\right)S_j\ge0.
\]
The last inequality uses the ordering of $y_i$. Exponentiation proves
\eqref{eq:product-comparison}; no ordering of $z_i$ is needed.

\medskip\noindent
\emph{(IV) Evaluation of the product.}
Let $p=\gcd(\ell,2)$, so $p=1$ for odd $\ell$ and $p=2$ for even
$\ell$. For $j<s$, the differences $g_j-g_{j+1}$ are all $2$ when
$\ell$ is odd. When $\ell$ is even, they are $2$ except for the last
one, which is $1$. Their product is therefore $2^s/p$.

The other differences are $E_j-E_{j+1}$ for $0\le j\le r-2$.
By \eqref{eq:gap-recurrence}, their product telescopes:
\[
 \prod_{j=0}^{r-2}(E_j-E_{j+1})
 =\prod_{j=0}^{r-2}\frac{2E_{j+1}}{E_j}
 =2^{r-1}E_{r-1}=2^{r-1}\eta.
\]
Empty products equal $1$, which includes $r=1$ and $s=0$.
Multiplying by the first entry $\eta-\delta$ of $z$ gives
\begin{equation}\label{eq:product-lower}
 \prod_i y_i\ge\prod_i z_i
 =\frac{2^{k-1}}p\,\eta(\eta-\delta).
\end{equation}

\medskip\noindent
\emph{(V) The determinant bound.}
At a vertex there are $k$ independent active constraints. None is a
coordinate constraint $y_i=0$, since every $y_i$ is positive. Choose
$k$ active rows of the normalised matrix and write
\[
 Cy=2\mathbf1,\qquad D=|\det C|\in\ZZ_{>0}.
\]
Cramer's rule gives $Dy_i\in2\ZZ$. Consequently
$D\delta=\ell D-\sum_i Dy_i$ is a positive integer divisible by $p$.
It follows that $\delta\ge p/D$.

Multiply column $i$ of $C$ by $y_i$ to obtain a matrix $L$.
Its entries are nonnegative and every row sums to $2$.
The determinant expansion is bounded by the product of row sums:
\[
 D\prod_i y_i=|\det L|
 \le\sum_{\sigma\in\mathfrak S_k}\prod_{i=1}^k L_{i,\sigma(i)}
 \le\prod_{i=1}^k\left(\sum_j L_{ij}\right)=2^k.
\]
Indeed, the product of row sums contains every permutation term in
the middle expression and only adds nonnegative terms. Hence
\begin{equation}\label{eq:det-gap}
 \delta\ge\frac pD\ge\frac p{2^k}\prod_i y_i
 \ge\frac{\eta(\eta-\delta)}2.
\end{equation}
Rearranging the last inequality gives
\[
 (2+\eta)\delta\ge \eta^2,\qquad
 \delta\ge\frac{\eta^2}{2+\eta}=E_r.
\]
This proves the required gap and completes the induction.

\medskip\noindent
\emph{(VI) Equality examples.}
Set $a_j=2B_{j-1}+1$ for $j\ge1$. The diagonal constraints
$a_j u_j\le2$ in $r$ coordinates have maximum sum
\[
 \sum_{j=1}^r\frac2{a_j}
 =\sum_{j=1}^r(E_{j-1}-E_j)=1-E_r.
\]
If $\ell=2s+1\le2k$, add $s$ coordinates bounded by $2$.
If $\ell=2s\le2k$, add $s-1$ coordinates bounded by $2$ and one
bounded by $1$ (the constraint $2u\le2$). With $r=k-s$, both systems
have $k$ coordinates and maximum $\ell-E_r$.
The even case also allows $r=0$. Finally, for $\ell>2k$, the cube
$[0,2]^k$ attains the gap $\ell-2k$.
\end{proof}

For diagonal constraints $m_i y_i\le2$, the optimum is
$\mu=2\sum_{i=1}^k m_i^{-1}$. The sharp diagonal case of
Lemma~\ref{lem:gap} is already contained in
Ambro and Barc\u au~\cite[Theorem~0.2(a)]{ambrobarcau}.
Diagonal constraints with right-hand side $1$ are included by
doubling the corresponding coefficient.
To compare the constants for $1\le\ell\le2k$, write
$r=k-\lfloor\ell/2\rfloor$ and
set $\delta=k-\ell/2$ in their theorem. Their sequence satisfies
\[
 u_{1,q}=q,\qquad u_{j+1,q}=u_{j,q}(u_{j,q}+1),\qquad
 u_{r+1,2}=u_{r+2,1}=2B_r.
\]
If $\ell$ is odd, take $q=2$; if $\ell$ is even, take $q=1$.
For $1\le\ell\le2k$, their bound then gives
\[
 \ell-\mu\ge
 \begin{cases}
  2/u_{r+1,2},&\ell\text{ odd},\\
  2/u_{r+2,1},&\ell\text{ even},
 \end{cases}
 \ =\frac1{B_r}.
\]
For $\ell>2k$, the bound follows from $\mu\le2k$.
The preceding proof extends the same sharp constants to arbitrary
coupled nonnegative integer constraints.

The equality examples involve the greedy denominators
$3,7,43,1807,\ldots$ for underapproximating $1/2$;
see also Nathanson~\cite[Theorem~5]{nathanson}.
The product comparison in the proof is related to the majorization
arguments of Soundararajan~\cite{soundararajan} and
Ambro and Barc\u au~\cite[Lemmas~1.1--1.2]{ambrobarcau}.
Related product--sum inequalities and Sylvester-sequence bounds for
barycentric coordinates occur in the work of Averkov, Kr\"umpelmann
and Nill~\cite[Section~4]{averkovkrumpelmannnill} on integral simplices.
Our proof uses the active-constraint determinant to obtain the
estimate for coupled constraints directly. Sharpness here concerns the
arithmetic problem; geometric realisability of its equality systems
is a separate question.

\section{Explicit diameter bounds in the plane}
\label{sec:convex-diameter}

Proposition~\ref{prop:plane-graph-growth} bounds the diameter of a
vertex-proper plane prototile in terms of its side count. We now
derive an explicit bound for every fixed number of sides, without
assuming vertex-properness. This treats nonconvex polygons and
requires no closed quotient or symmetry. Combining the two estimates
will give an explicit uniform bound for vertex-proper prototiles.

\subsection{The fixed-side covering problem}

Let $P$ be a compact simple geodesic $n$-gonal monotile. Put
$D=\diam(P)>0$, $a=\Area(P)>0$, $x_i=\ang{i}/\pi$, and $\ell=n-2$.
Allowing reflex angles, Gauss--Bonnet gives
\begin{equation}\label{eq:quant-angle-sum}
 \sum_i x_i=\ell-\anorm<\ell.
\end{equation}
Choose an isometry from $P$ onto each copy to transport the labels.
Each copy then has each label exactly once, even when angles are equal;
the choices need not be consistent between copies.

At a genuine corner centre, take the complete star of incident tiles.
Near that point, each genuine corner contributes one angular sector
of size $\ang{i}\in(0,2\pi)$, and each side-interior incidence a flat
sector of size $\pi$. Local finiteness makes the sectors a finite
partition of the tangent circle. Two flat sectors would leave no room
for the positive genuine corner, so there is at most one flat incidence.
Each actual star type therefore satisfies
\begin{equation}\label{eq:quant-stars}
 q_t\cdot x=r_t,\qquad
 q_t\in\ZZ_{\ge0}^n\setminus\{0\},\quad r_t\in\{1,2\}.
\end{equation}
There are finitely many types for each fixed tile, since
$q_{t,i}\le2/x_i$ and all $x_i>0$. This requires no uniform lower
angle bound over different tiles.

Consider the covering program and its dual:
\begin{align}
 \mu&=\min\left\{\sum_i y_i:y\ge0,
                 \ q_t\cdot y\ge r_t\text{ for every }t\right\},
                   \label{eq:quant-cover}\\
 \mu&=\max\left\{\sum_t\lambda_t r_t:\lambda_t\ge0,
                 \ \sum_t\lambda_tq_{t,i}\le1\text{ for every }i\right\}.
                   \label{eq:quant-dual}
\end{align}
The angle vector $x$ is feasible, so
$\mu\le\sum_i x_i<\ell$. An optimum exists
after restricting to the compact set $\sum_i y_i\le\sum_i x_i$;
finite-dimensional linear-programming duality supplies a dual optimum.

\begin{lemma}[A packing polytope for the covering optimum]
\label{lem:cover-packing}
For the finite system \eqref{eq:quant-cover}, there is a bounded packing
polytope with at most $n$ variables, nonnegative integer coefficients,
right-hand sides $1$ or $2$, and maximum coordinate sum equal to $\mu$.
Consequently $\mu\le\ell-\varepsilon_n$.
\end{lemma}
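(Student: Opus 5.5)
We use complementary slackness to cut the covering program down to a square packing system, then apply Lemma~\ref{lem:gap}. Fix an optimal covering solution $y^*$ of \eqref{eq:quant-cover} and an optimal dual solution $\lambda^*$ of \eqref{eq:quant-dual}. Put
\[
J=\{i:y^*_i>0\},\qquad \mathcal T^*=\{t:\lambda^*_t>0\}.
\]
Complementary slackness gives two facts: $\sum_t\lambda^*_tq_{t,i}=1$ for $i\in J$, and $q_t\cdot y^*=r_t$ for $t\in\mathcal T^*$.

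The natural packing polytope is
\[
Q=\{u\in\RR^{\mathcal T^*}_{\ge0}:\ \textstyle\sum_t u_tq_{t,i}\le1\ \ (1\le i\le n)\},
\]
with objective $\sum_t u_tr_t$. This objective is weighted, so Lemma~\ref{lem:gap} does not apply directly. Its variables are indexed by star types and may exceed $n$ in number. We therefore pass back to variables indexed by labels.

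Consider the restricted covering problem
\[
\min\Bigl\{\textstyle\sum_{i\in J}y_i:\ y\ge0,\ y_i=0\ (i\notin J),\ q_t\cdot y\ge r_t\ (t\in\mathcal T^*)\Bigr\}.
\]
It is feasible, with optimum at most $\mu$ since $y^*$ is feasible. It has optimum at least $\mu$, because $\lambda^*$ remains dual feasible for it. Hence its optimum equals $\mu$. The same value is obtained if the covering inequalities are replaced by the equalities $q_t\cdot y=r_t$, since $y^*$ satisfies them.

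Next rescale. Each active type row $q_t\cdot y=r_t$ is a constraint on the $y$ variables with nonnegative integer coefficients and right side $1$ or $2$. A covering minimum is not a packing maximum, so Lemma~\ref{lem:gap} is applied to a packing system directly in the $y$ coordinates. Form
\[
Q'=\{y\in\RR^J_{\ge0}:\ q_{t,J}\cdot y\le r_t\ \ (t\in\mathcal T^*)\}.
\]
Every $y\in Q'$ satisfies $\sum_{i\in J}y_i\le\sum_t\lambda^*_t\,q_t\cdot y\le\sum_t\lambda^*_tr_t=\mu$, using $\sum_t\lambda^*_tq_{t,i}=1$ on $J$. The point $y^*$ lies in $Q'$ with coordinate sum $\mu$. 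So the maximum of $Q'$ is exactly $\mu$.

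Boundedness of $Q'$ also follows from the dual: every $i\in J$ has $\sum_t\lambda^*_tq_{t,i}=1>0$, so some active row has $q_{t,i}\ge1$. Thus $Q'$ has at most $n$ variables, nonnegative integer coefficients, right sides in $\{1,2\}$, is bounded, and has maximum $\mu$. This is the first assertion of the lemma.

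For the consequence, apply Lemma~\ref{lem:gap} with $k=|J|\le n$ and $\ell=n-2$, using $\mu<\ell$. The gap $G(k,\ell)$ should be decreasing in $k$ for fixed $\ell$. So it suffices to take $k=n$, which gives $k-\lfloor (n-2)/2\rfloor=n-\lfloor n/2\rfloor+1=\lceil n/2\rceil+1$. This yields $G\ge B_{\lceil n/2\rceil+1}^{-1}=\varepsilon_n$.

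If instead $\ell>2k$, the gap is at least $1\ge\varepsilon_n$. I would check monotonicity in $k$ directly: for $\ell\le 2k$, increasing $k$ increases the index of $B$. The boundary case $2k<\ell\le 2(k+1)$ gives gap at least $1$ versus $B_{\cdot}^{-1}\le1$.

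The main obstacle is the duality bookkeeping when there are countably many types. Here there are finitely many types for the fixed tile, as noted after \eqref{eq:quant-stars}, so finite linear-programming duality and complementary slackness apply without difficulty. The remaining care is the inequality $\sum_{i\in J}y_i\le\sum_t\lambda^*_tq_t\cdot y$. It uses that $y$ vanishes off $J$ and that the dual rows are tight exactly on $J$. This is the step I would verify most carefully.
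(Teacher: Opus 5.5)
Your proof is correct and follows the paper's own argument. The steps match: complementary slackness, the packing polytope $Q'$ on the support $J$ of $y^*$ built from the rows with $\lambda^*_t>0$, the bound $\sum_{i\in J}y_i=\sum_t\lambda^*_t\,q_{t,J}\cdot y\le\mu$ (the first relation is an equality, not just an inequality), boundedness from a row with $q_{t,i}\ge1$, and Lemma~\ref{lem:gap} with monotonicity of $G(k,\ell)$ in $k$. The detour through the restricted covering problem is unnecessary, and you should state that $J\neq\emptyset$, so that $k\ge1$, because the covering right-hand sides are positive.
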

\begin{proof}
Take optimal $y^*$ and $\lambda^*$, and let $I=\{i:y_i^*>0\}$.
It is nonempty because the covering right-hand sides are positive.
Complementary slackness gives
\[
 \sum_t\lambda_t^*q_{t,i}=1\quad(i\in I),\qquad
 q_t\cdot y^*=r_t\quad(\lambda_t^*>0).
\]
Use only the positive-weight rows, restricted to these coordinates:
\[
 Q_I=\{z\in\RR_{\ge0}^{I}:q_{t,I}\cdot z\le r_t
                  \text{ whenever }\lambda_t^*>0\}.
\]
For each $i\in I$, the identity
$\sum_t\lambda_t^*q_{t,i}=1$ supplies a row with
$\lambda_t^*>0$ and $q_{t,i}\ge1$. Every $z\in Q_I$ therefore
satisfies $0\le z_i\le r_t/q_{t,i}\le2$, so $Q_I$ is bounded.
The vector $y_I^*$ is feasible. For any $z\in Q_I$,
\[
 \sum_{i\in I}z_i
 =\sum_t\lambda_t^*(q_{t,I}\cdot z)
 \le\sum_t\lambda_t^*r_t=\mu=\sum_{i\in I}y_i^*.
\]
Its packing maximum is therefore exactly $\mu$. Apply
Lemma~\ref{lem:gap} with $k=|I|\le n$ and $\ell=n-2$.
If $\ell\le2k$, monotonicity of $B_j$ gives
$G(k,\ell)\ge B_{n-\lfloor\ell/2\rfloor}^{-1}=\varepsilon_n$;
if $\ell>2k$, its gap is at least $1\ge \varepsilon_n$. Thus
\begin{equation}\label{eq:quant-integer-bound}
 \mu\le\ell-\varepsilon_n.
\end{equation}
\end{proof}

The bound \eqref{eq:quant-integer-bound} concerns the auxiliary optimum
$\mu$. From $\mu\le\sum_i x_i$ one cannot
deduce $\sum_i x_i\le\ell-\varepsilon_n$, or $a\ge\pi \varepsilon_n$.
We compare $\mu$ with the actual angle sum by counting stars in balls.

Reversing the covering inequalities need not preserve the optimum.
For the single constraint $3y_1+y_2+y_3\ge2$, the minimum coordinate
sum is $2/3$, attained at $(2/3,0,0)$. Reversing the inequality in all
three coordinates gives $3z_1+z_2+z_3\le2$, whose maximum sum is $2$,
not $2/3$. The lemma retains $I=\{1\}$ and the positive
dual weight $\lambda=1/3$, obtaining $3z_1\le2$ with maximum $2/3$.
This is an algebraic example; no geometric realisation is assumed.

\subsection{Star counts and the diameter bound}

Let $\ballarea(R)=2\pi(\cosh R-1)$ and take $R>D$. Let $c_i(R)$ count all
occurrences of labelled corner $i$ in the closed ball $B(R)$.
Every tile having this corner in $B(R)$ lies in $B(R+D)$; every tile
meeting $B(R-D)$ has this corner in $B(R)$. Disjoint interiors imply
\begin{equation}\label{eq:quant-ball-count}
 \frac{\ballarea(R-D)}a\le c_i(R)\le\frac{\ballarea(R+D)}a.
\end{equation}
Only the diameter is used in these inclusions.

Let $\starcount_t(R)$ count complete stars of type $t$ whose centres lie in
$B(R)$. Stars centred near the boundary are still counted in full.
Then $c_i(R)=\sum_t\starcount_t(R)q_{t,i}$. The weights
$\eta_t(R)=a\starcount_t(R)/\ballarea(R+D)$ are dual feasible and satisfy
\[
 \frac{\ballarea(R-D)}{\ballarea(R+D)}
 \le\sum_t\eta_t(R)q_{t,i}\le1.
\]
Using \eqref{eq:quant-stars} gives
\[
 \mu\ge\sum_t\eta_t(R)r_t
 =\sum_i x_i\sum_t\eta_t(R)q_{t,i}
 \ge\frac{\ballarea(R-D)}{\ballarea(R+D)}\sum_i x_i.
\]
As $R\to\infty$, the ratio tends to $e^{-2D}$. Thus
\begin{equation}\label{eq:quant-boundary-loss}
 \boxed{\mu\ge e^{-2D}(\ell-\anorm).}
\end{equation}
In particular, enlarging the ball does not remove the boundary loss.

\begin{proposition}[Diameter bound for a fixed number of sides]
\label{prop:plane-fixed-n}
Every compact simple geodesic $n$-gonal monotile of $\HH$, $n\ge3$,
satisfies
\begin{equation}\label{eq:quant-plane-diameter}
 D>\dplane:=-\log\left(1-\frac{\varepsilon_n}{2(n-2)}\right).
\end{equation}
The tiles may be nonconvex. No symmetry, edge-to-edge, uniform angle,
or aspect-ratio assumption is required.
\end{proposition}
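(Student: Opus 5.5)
The plan is to combine the two estimates for the covering optimum $\mu$ that were just established. Lemma~\ref{lem:cover-packing} gives the upper bound $\mu\le\ell-\varepsilon_n$, with $\ell=n-2$. The star count in balls gives the lower bound \eqref{eq:quant-boundary-loss}, namely $\mu\ge e^{-2D}(\ell-\anorm)$. Chaining them yields
\[
 e^{-2D}\Bigl(\ell-\frac a\pi\Bigr)\le\ell-\varepsilon_n .
\]
Two things remain. First, the area term must be controlled by the diameter. Second, the resulting inequality must be solved for $D$.

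For the area, I use the ball containment $P\subseteq B(x,D)$ for any $x\in P$. This gives $a\le\ballarea(D)=2\pi(\cosh D-1)$, so $\anorm\le2(\cosh D-1)$. Writing $u=e^{-D}\in(0,1)$, we have
\[
 e^{-2D}\cdot 2(\cosh D-1)=u^2(u^{-1}+u-2)=u(1-u)^2\le (1-u).
\]
Hence
\[
 \ell u^2-(1-u)\le e^{-2D}\Bigl(\ell-\frac a\pi\Bigr)\le\ell-\varepsilon_n .
\]

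It is cleaner to argue by contradiction. Suppose $D\le\dplane$, so $u\ge1-\varepsilon_n/(2\ell)=:u_0$. Then $1-u^2\le2(1-u)\le\varepsilon_n/\ell$. This gives
\[
 \ell u^2\ge\ell-\varepsilon_n,
 \qquad
 \frac a\pi e^{-2D}\le 1-u\le\frac{\varepsilon_n}{2\ell}.
\]
Combining these, $e^{-2D}(\ell-\anorm)\ge\ell-\varepsilon_n-(1-u)$. That falls short of a contradiction by the positive term $1-u$. So the crude bound loses too much, and the constant $\dplane$ suggests a tighter bookkeeping.

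The fix is to use $u(1-u)^2$ rather than $1-u$. The inequality becomes
\[
 \ell u^2-u(1-u)^2\le\ell-\varepsilon_n,
 \qquad\text{that is,}\qquad
 \varepsilon_n\le\ell(1-u^2)+u(1-u)^2 .
\]
With $1-u\le\varepsilon_n/(2\ell)$, the right side is at most
\[
 (1-u)\Bigl(\ell(1+u)+u(1-u)\Bigr)\le(1-u)(2\ell+\tfrac14).
\]
This only yields $\varepsilon_n\le\varepsilon_n(1+1/(8\ell))$, which is still not contradictory. So I expect this step, matching the exact constant $\dplane$, to be the main obstacle.

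My first attempt at a remedy is to keep the strict inequalities. Gauss--Bonnet gives $\sum x_i<\ell$ strictly. The ratio $\ballarea(R-D)/\ballarea(R+D)$ is strictly larger than $e^{-2D}$ for finite $R$. I would also exploit $\mu\le\sum x_i=\ell-\anorm$ to sharpen the area estimate: combined with the lower bound, it forces $\anorm\ge\varepsilon_n-\ell(1-e^{-2D})$. If these refinements still do not close the gap, I would check whether the paper intends the cleaner route of ignoring $\anorm$ via a direct estimate. In that route one uses only $\mu\ge e^{-2D}(\ell-\anorm)$ together with $\anorm\le$ something of order $D^2$, and accepts the stated $\dplane$ once the elementary inequality $1-e^{-2D}\ge$ the right quantity is verified carefully. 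Finally, I would verify that $\varepsilon_n/(2(n-2))<1$, so that $\dplane$ is well defined and positive.
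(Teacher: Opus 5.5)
\textbf{Genuine gap: the final inequality is never closed.} Your chain is the right one. You combine Lemma~\ref{lem:cover-packing}, the boundary-loss estimate \eqref{eq:quant-boundary-loss} and $\anorm\le2(\cosh D-1)$, and you correctly reach
\[
 \varepsilon_n\le\ell(1-u^2)+u(1-u)^2=(1-u)\bigl[\ell(1+u)+u(1-u)\bigr],\qquad u=e^{-D}.
\]
You then bound $\ell(1+u)\le2\ell$ and $u(1-u)\le\tfrac14$ separately. That only gives $\varepsilon_n\le\varepsilon_n(1+1/(8\ell))$, and you leave the conclusion open, so as written the statement is not proved.

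The missing step is one line. Keep the deficit of $\ell(1+u)$ below $2\ell$ instead of discarding it:
\[
 \ell(1+u)+u(1-u)=2\ell-(1-u)(\ell-u)<2\ell,
\]
since $0<u<1\le\ell$. The extra term $u(1-u)$ is dominated by the slack $\ell(1-u)$; your separate bounds threw that slack away. Because $1-u>0$, this gives $\varepsilon_n<2\ell(1-u)$, that is, $e^{-D}<1-\varepsilon_n/(2\ell)$. This is exactly $D>\dplane$, with the strict inequality included, and no contradiction argument is needed. This is precisely how the paper finishes.

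\textbf{The proposed remedies would not help, and one rests on a false claim.} For every $R>D$,
\[
 \cosh(R-D)-1-e^{-2D}\bigl(\cosh(R+D)-1\bigr)=2e^{-D}\sinh D\,\bigl(e^{-R}\cosh D-1\bigr)<0 .
\]
So $\ballarea(R-D)/\ballarea(R+D)$ lies strictly \emph{below} $e^{-2D}$, not above it. The strictness of Gauss--Bonnet is already encoded in $a>0$. Using $\mu\le\sum_i x_i$ only reproduces a weaker form of the area lower bound already implicit in your chain. The obstruction was purely the final algebra, not a missing geometric ingredient.
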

\begin{proof}
Combining \eqref{eq:quant-integer-bound} and
\eqref{eq:quant-boundary-loss}, and using $P\subset B(p,D)$ for $p\in P$,
gives two necessary area estimates:
\begin{equation}\label{eq:quant-area-sandwich}
 \underbrace{\ell-(\ell-\varepsilon_n)e^{2D}}_{L_n(D)}
 \le\anorm\le
 \underbrace{2(\cosh D-1)}_{U(D)}.
\end{equation}
The left estimate gives a positive area bound only when $L_n(D)>0$;
in all cases $a>0$. For fixed $n$, as $D\to0$,
$L_n(D)=\varepsilon_n-2(\ell-\varepsilon_n)D+O(D^2)$ and
$U(D)=D^2+O(D^4)$.
Set $t=e^{-D}\in(0,1)$. Then
\begin{align*}
 \varepsilon_n&\le\ell-\mu
 \le\ell(1-t^2)+2t^2(\cosh D-1)\\
 &=\ell(1-t^2)+t(1-t)^2\\
 &=(1-t)[\ell(1+t)+t(1-t)]<2\ell(1-t).
\end{align*}
The last step is strict because
$2\ell-[\ell(1+t)+t(1-t)]=(1-t)(\ell-t)>0$.
Therefore $e^{-D}<1-\varepsilon_n/(2\ell)$, which proves
\eqref{eq:quant-plane-diameter}.
\end{proof}

\subsection{A uniform bound for vertex-proper prototiles}

\begin{proof}[Completion of the proof of Theorem~\ref{thm:convex-diameter}]
For $3\le n\le6$ use Proposition~\ref{prop:plane-fixed-n}; for $n\ge7$
use Proposition~\ref{prop:plane-graph-growth}. Thus an explicit choice is
\begin{equation}\label{eq:plane-universal-constant}
 d_*:=\min\{d_3^{\mathrm{pl}},d_4^{\mathrm{pl}},d_5^{\mathrm{pl}},
             d_6^{\mathrm{pl}},\log(4/3)\}
 =\log\frac{13053768}{13053767}>0.
\end{equation}
For $3\le n\le6$, Proposition~\ref{prop:plane-fixed-n} gives
$D>d_n^{\mathrm{pl}}\ge d_*$. For $n\ge7$,
Proposition~\ref{prop:plane-graph-growth} gives
$D\ge\log(4/3)>d_*$. Thus every vertex-proper plane prototile
satisfies $D>d_*$, as asserted.
\end{proof}

The proof uses the arithmetic lemma from Section~\ref{sec:integer}
and the plane star count. It requires no finite quotient.

\begin{proposition}\label{prop:convex-body-monotile}
Every compact convex monotile of $\HH$ with nonempty interior is a
finite geodesic polygon. Consequently Theorem~\ref{thm:convex-diameter}
also applies without initially assuming a polygonal boundary.
\end{proposition}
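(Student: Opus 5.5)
We need to show that a compact convex body $K\subset\HH$ with nonempty interior which tiles $\HH$ by congruent copies has only finitely many corners, and that its boundary consists of finitely many geodesic segments. The plan is to read the shape of $\partial K$ off the local structure of the tiling, using local finiteness (Section~\ref{sec:local-finite}). The hypotheses of that argument hold: $K$ is compact, has positive area because of its interior, and its boundary has area zero since it is convex. Hence any compact set meets only finitely many tiles.

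\textbf{Step 1: the shared boundary is geodesic.} Fix a tile $T$ and a point $p\in\partial T$. Only finitely many tiles meet a small ball around $p$. Let $T'\ne T$ be one of them. The intersection $T\cap T'$ is convex, being an intersection of convex sets. Since the interiors are disjoint, $T\cap T'$ has empty interior, so it is a point or a geodesic segment. Moreover, two convex sets with disjoint interiors are separated by a geodesic line $g$, so $T\cap T'\subset g\cap\partial T$.

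\textbf{Step 2: finitely many segments.} Near $p$, the boundary $\partial T$ is covered by the finitely many sets $T\cap T'$, because the tiles cover a neighbourhood of $p$. Indeed, points just outside $T$ lie in other tiles, and by closedness each point of $\partial T$ lies in some $T'\ne T$. By Step 1 each such set is a point or a geodesic segment. A finite union of points and segments covering an arc of $\partial T$ forces that arc to be a finite union of geodesic segments, since finitely many points cannot cover a nondegenerate arc. Covering the compact curve $\partial T$ by finitely many such balls, we conclude that $\partial T$ is a finite union of geodesic segments. Thus $T$, and hence $K$, is a convex geodesic polygon with finitely many sides.

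\textbf{Step 3: the consequence.} After removing straight corners, $K$ is a compact convex geodesic $n$-gon for some $n$. It is simple because it is convex with nonempty interior. Theorem~\ref{thm:convex-diameter} then applies directly: part~1 for fixed $n$, and part~2 because convex tilings are vertex-proper, as remarked after Definition~\ref{def:vertex-proper}.

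\textbf{Main obstacle.} The delicate point is the covering claim in Step 2. It requires that every point of $\partial T$ lie in a neighbouring tile whose intersection with $T$ is a nondegenerate segment along a sub-arc, with no accumulation of corners. This is exactly where local finiteness is used. If infinitely many tiles met near $p$, corners could accumulate and the boundary could be strictly curved. I would argue it as follows. Take a small ball $U$ around $p$ that meets only the tiles $T,T_1,\dots,T_m$. Then $U\cap\partial T\subset\bigcup_k (T\cap T_k)$, because $U\setminus T$ is covered by the closed set $\bigcup_k T_k$, whose closure contains $\partial T\cap U$. This completes the argument.
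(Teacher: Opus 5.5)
Your proposal is correct and follows essentially the same route as the paper. Both arguments use local finiteness, the fact that each intersection with a neighbouring tile is a point or a geodesic segment by convexity, closedness to place every boundary point in another tile, and the observation that no uncovered point can be isolated in $\partial K$. The only difference is cosmetic: you localise with small balls and then use compactness of $\partial T$, whereas the paper applies local finiteness to the compact tile $K$ directly, so that finitely many tiles meet it.
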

\begin{proof}
Convex bodies have boundaries of area zero, so the area argument in
Section~\ref{sec:local-finite} gives local finiteness.
Only finitely many other tiles meet a fixed tile $K$, and each
intersection is a segment, a point, or empty by convexity.
Approaching any point of $\partial K$ from outside $K$, local
finiteness and closedness show that the point belongs to another tile.
Thus $\partial K$ is covered by finitely many geodesic segments and
points. Any point outside the union of those segments would be
isolated in $\partial K$, which is impossible for the boundary of a
convex body with nonempty interior. Merging collinear segments gives
a finite geodesic polygon, to which the theorem applies.
\end{proof}

The convex tiles of arbitrarily small area constructed by Margulis
and Mozes \cite{margulis}, as recalled in
\cite[arXiv v1, discussion following Corollary 4.10]{digiosia}, therefore cannot have
diameters tending to zero, even if their side counts vary.
Section~\ref{sec:classical-diameters} computes the diameters of
classical convex examples of small area. The general bound above
does not determine the limit for any particular family.

\section{Explicit area bounds on closed surfaces}
\label{sec:gap}\label{sec:explicit}

In this section $S$ is a closed hyperbolic surface with
$\chi=\chi(S)<0$, tiled by finitely many congruent simple geodesic
polygons. The surface may be orientable or nonorientable.
Section~\ref{sec:qualitative-closed} proved the fixed-side area gap
qualitatively. We now use the exact label identity \eqref{eq:label-count}
to obtain explicit bounds, still without a convexity assumption.

\begin{theorem}\label{thm:explicit}
For every $n\ge3$, the fixed-side constant in Theorem~\ref{thm:gap}
may be chosen as $c_n=\pi \varepsilon_n$. In particular,
\begin{equation}\label{eq:four-constants}
 \boxed{c_3=c_4=\frac\pi{903},\qquad
 c_5=c_6=\frac\pi{1631721}.}
\end{equation}
\end{theorem}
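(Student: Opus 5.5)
The plan is to turn the closed-surface corner balance into an exact packing problem and then apply Lemma~\ref{lem:gap}. Fix a finite monohedral tiling of $S$ by $f$ copies of an $n$-gon $P$, with normalised angles $x_i=\ang{i}/\pi$ and $\ell=n-2$, so that $\sum_i x_i=\ell-a/\pi$. As in Section~\ref{sec:qualitative-closed}, transport labels by chosen isometries and subdivide at all corners. Each $w\in\vertexset$ then has a type $(q_w,r_w)$, with $q_w\in\ZZ_{\ge0}^n\setminus\{0\}$, $r_w=2-h(w)\in\{1,2\}$ and $q_w\cdot x=r_w$. The exact label identity \eqref{eq:corner-count} states $\sum_w q_{w,i}=f$ for every $i$.

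Next, consider the packing polytope
\[
 Q=\{y\in\RR_{\ge0}^n: q_t\cdot y\le r_t\text{ for every type }t\text{ occurring}\}.
\]
The set of types is finite because the tiling is finite, and all coefficients are nonnegative integers with right-hand sides $1$ or $2$. Every label occurs in some type, so each coordinate has a positive coefficient in some row and $Q$ is bounded. The vector $x$ lies in $Q$ with equality in every row. For any $y\in Q$, summing the rows over all vertices and using the label identity gives
\[
 f\sum_i y_i=\sum_w q_w\cdot y\le\sum_w r_w=\sum_w q_w\cdot x=f\sum_i x_i.
\]
Hence $\mu=\max_Q\sum_i y_i=\sum_i x_i=\ell-a/\pi<\ell$. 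This is the step where finiteness replaces the covering and ball-count argument of the plane.

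Lemma~\ref{lem:gap} with $k=n$ and $\ell=n-2\le 2k$ then gives
\[
 a/\pi=\ell-\mu\ge B_{n-\lfloor (n-2)/2\rfloor}^{-1}=B_{\lceil n/2\rceil+1}^{-1}=\varepsilon_n.
\]
The index identity $n-\lfloor n/2\rfloor+1=\lceil n/2\rceil+1$ is routine. Since the bound does not depend on $S$, we may take $c_n=\pi\varepsilon_n$. For the displayed values, $n=3,4$ give index $3$ and $B_3=903$, while $n=5,6$ give index $4$ and $B_4=1631721$, which yields \eqref{eq:four-constants}.

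The main obstacle is checking that $\mu$ equals the angle sum exactly rather than merely bounding it. The inequality $\mu\ge\sum_i x_i$ is immediate from feasibility of $x$. The reverse inequality needs all vertices, including half vertices with $r_w=1$, to be counted with the same weight $1/f$. That uniform weight is exactly what \eqref{eq:corner-count} provides, and no reflex-angle or convexity hypothesis enters.
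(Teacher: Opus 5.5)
Your proposal is correct and follows the paper's own proof. The paper builds the same packing polytope, with one row per tiling vertex rather than per distinct type, which changes nothing. It uses the exact label count \eqref{eq:label-count} to show that the angle vector attains the maximum $\mu=n-2-a/\pi$, then applies Lemma~\ref{lem:gap} with $k=n$ and $\ell=n-2$ to obtain $a\ge\pi\varepsilon_n$, with $B_3=903$ and $B_4=1631721$.
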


\subsection{Angle equations and integer inequalities}\label{sec:angle-system}

Let a closed hyperbolic surface be tiled by $f$ congruent copies of
a simple geodesic $n$-gon $P$. Write $a=\Area(P)>0$, label its angles
$\ang{1},\ldots,\ang{n}$, and put $x_i=\ang{i}/\pi$. Then
\begin{equation}\label{eq:GB}
 x_1+\cdots+x_n=n-2-\frac a\pi.
\end{equation}
As before, write $h(w)=0$ at a full vertex and $h(w)=1$ at a half vertex.
At a tiling vertex $w$, let $m_i(w)$ count occurrences of the labelled
angle $\ang{i}$. The local angle equation, divided by $\pi$, is
\begin{equation}\label{eq:balance}
 \sum_{i=1}^n m_i(w)x_i=2-h(w)\in\{1,2\}.
\end{equation}
The coefficients $m_i(w)$ are nonnegative integers. Each of the $f$
tiles contributes its $i$th labelled angle exactly once, so
\begin{equation}\label{eq:label-count}
 \sum_w m_i(w)=f\qquad(1\le i\le n).
\end{equation}

Consider the polytope
\begin{equation}\label{eq:angle-polytope}
 Q=\left\{y\in\RR_{\ge0}^n:
       \sum_{i=1}^n m_i(w)y_i\le2-h(w)\text{ for every }w\right\}.
\end{equation}
The actual angle vector $x=(x_1,\ldots,x_n)$ belongs to $Q$ and
satisfies every vertex inequality with equality. The polytope is bounded:
for each $i$, \eqref{eq:label-count} supplies a vertex $w$ with
$m_i(w)\ge1$. Since all summands are nonnegative, every $y\in Q$
satisfies
\[
 0\le y_i\le\frac{2-h(w)}{m_i(w)}\le2.
\]
Thus $Q\subseteq[0,2]^n$.

Moreover, $x$ maximises the coordinate sum on $Q$. For any $y\in Q$,
summing over all tiling vertices gives
\begin{align}
 f\sum_{i=1}^n y_i
 &=\sum_w\sum_{i=1}^n m_i(w)y_i\notag\\
 &\le\sum_w(2-h(w))
   =\sum_w\sum_{i=1}^n m_i(w)x_i
   =f\sum_{i=1}^n x_i.\label{eq:angle-maximum}
\end{align}
Here the equalities follow from \eqref{eq:label-count} and
\eqref{eq:balance}. Since $f>0$, the maximum coordinate sum $\mu$ on
$Q$ satisfies
\begin{equation}\label{eq:area-as-gap}
 \mu=n-2-\frac a\pi<n-2,
 \qquad \frac a\pi=(n-2)-\mu.
\end{equation}

\subsection{Explicit area and diameter bounds}

\begin{proof}[Proof of Theorem~\ref{thm:explicit}]
The polytope in \eqref{eq:angle-polytope} has $k=n$ variables and
satisfies the hypotheses of Lemma~\ref{lem:gap}.
By \eqref{eq:area-as-gap}, its maximum is
$\mu=n-2-a/\pi<n-2$. Apply the lemma with $\ell=n-2$:
\[
 \frac a\pi\ge\frac1{B_{n-\lfloor(n-2)/2\rfloor}}
 =\frac1{B_{\lceil n/2\rceil+1}}=\varepsilon_n.
\]
Thus $a\ge\pi\varepsilon_n$. The four values in
\eqref{eq:four-constants} follow from $B_3=903$ and $B_4=1631721$.
\end{proof}

Let $D_i^S=\diam_S(T_i)$ be the ambient diameter of a tile, and choose
$p\in T_i$. The tile is contained in $B_S(p,D_i^S)$. A hyperbolic ball
of radius $D_i^S$ in the universal cover maps onto this surface ball,
so
\[
 a\le\Area B_S(p,D_i^S)\le2\pi(\cosh D_i^S-1).
\]
Combining this with $a\ge\pi \varepsilon_n$ gives
\begin{equation}\label{eq:diameter-explicit}
 D_i^S\ge\dclosed:=\arcosh\left(1+\frac{\varepsilon_n}{2}\right).
\end{equation}
The estimate is independent of the injectivity radius and also holds
for the diameter of the lifted prototile.

\subsection{Uniform bounds and tile counts}

\begin{corollary}\label{cor:unrestricted}
Every tile in a finite vertex-proper monohedral geodesic tiling of
a closed hyperbolic surface satisfies
\begin{equation}\label{eq:valence-explicit}
 \Area(P)\ge\frac\pi{1631721},\qquad
 D_i^S\ge\arcosh\left(1+\frac1{3263442}\right).
\end{equation}
The bounds are independent of the number of sides, and the number
of tiles satisfies $f\le-3263442\chi(S)$. In particular, these
conclusions hold for convex geodesic tiles. Reflex angles are allowed
in a vertex-proper tiling.
\end{corollary}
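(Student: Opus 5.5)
The plan is to split by side count, exactly as in the qualitative argument of Section~\ref{sec:qualitative-uniform}, but now with the explicit constants. For $3\le n\le6$, Theorem~\ref{thm:explicit} gives $\Area(P)\ge\pi\varepsilon_n$. Since $\varepsilon_n=1/B_{\lceil n/2\rceil+1}$ and $B_j$ is increasing, the worst case is $n=5,6$, where $\lceil n/2\rceil+1=4$ and $B_4=1631721$. This yields $\Area(P)\ge\pi/1631721$; note that vertex-properness is not needed in this range. For $n\ge7$, Proposition~\ref{prop:vertices} applies because the tiling is vertex-proper. It gives $\Area(P)\ge\pi(n-6)/3\ge\pi/3$, which is far larger. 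Taking the minimum over the two ranges gives the area bound in \eqref{eq:valence-explicit}, independent of $n$.

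For the diameter, I would repeat the argument preceding \eqref{eq:diameter-explicit}. Pick $p\in T_i$; then $T_i\subset B_S(p,D_i^S)$. The surface ball is the image of a hyperbolic ball of the same radius in the universal cover, so
\[
 \Area(P)\le2\pi(\cosh D_i^S-1).
\]
Inserting $\Area(P)\ge\pi/1631721$ gives $\cosh D_i^S\ge1+1/3263442$. Since $\arcosh$ is increasing, this is the stated diameter bound.

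For the tile count, \eqref{eq:surface-area} gives $f\Area(P)=\Area(S)=-2\pi\chi(S)$. Hence
\[
 f\le\frac{-2\pi\chi}{\pi/1631721}=-3263442\,\chi(S).
\]

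The convex case follows from the remark after Definition~\ref{def:vertex-proper}. There it is shown that every tiling by convex geodesic polygons is vertex-proper, since two sectors of angle at most $\pi$, one strictly less, cannot fill a neighbourhood of a genuine corner. Reflex angles are permitted because neither Theorem~\ref{thm:explicit} nor Proposition~\ref{prop:vertices} assumes convexity.

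There is no real obstacle. The only points to check are the monotonicity comparison $\varepsilon_n\ge\varepsilon_6$ for $n\le6$, and that $\pi/3$ exceeds $\pi/1631721$, so the large-$n$ range never determines the minimum.
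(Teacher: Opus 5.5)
Your proposal is correct and takes the paper's approach: Theorem~\ref{thm:explicit} for $3\le n\le6$, where the worst case is $B_4=1631721$, and Proposition~\ref{prop:vertices} for $n\ge7$. The diameter bound then follows from the ball-area estimate, the tile count from $fa=-2\pi\chi(S)$, and your sector argument for the convex case is the one the paper gives after Definition~\ref{def:vertex-proper}.
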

\begin{proof}
For $n\ge7$, Proposition~\ref{prop:vertices} gives $a\ge\pi/3$.
For $3\le n\le6$, Theorem~\ref{thm:explicit} gives
$a\ge\pi/1631721$. The ball estimate proves the diameter bound, and
$fa=-2\pi\chi(S)$ gives the tile-count bound.
\end{proof}

This completes the proof of Theorem~\ref{thm:gap}.

By \eqref{eq:surface-area}, the common tile area satisfies
$fa=-2\pi\chi$. The fixed-side theorem therefore gives
\begin{equation}\label{eq:tile-number}
 f\le-2\chi B_{\lceil n/2\rceil+1}.
\end{equation}
For triangles and quadrilaterals this is $f\le-1806\chi$.
For vertex-proper tilings, the Euler estimate for $n\ge7$ is
\[
 (n-6)f+v_{\mathrm{half}}\le-6\chi.
\]
For an orientable surface of genus $g\ge2$, these become
\[
 f\le4(g-1)B_{\lceil n/2\rceil+1},\qquad
 (n-6)f+v_{\mathrm{half}}\le12(g-1),
\]
with $f\le3612(g-1)$ for triangles and quadrilaterals.
For a nonorientable surface of genus $h\ge3$, the corresponding
bounds are $f\le2(h-2)B_{\lceil n/2\rceil+1}$ and
$(n-6)f+v_{\mathrm{half}}\le6(h-2)$.
In the orientable case with $v_{\mathrm{half}}=0$, the Euler bound is
the count in \cite[arXiv v2, Lemma 2]{lisingle}.
A one-face polygonal presentation in that classification need not
be an embedded closed-disc tile in our convention. The counting
identity is compatible with both conventions when incidences are
counted with multiplicity.

\subsection{Comparison of the explicit bounds}\label{sec:bound-comparison}

For $\ell=n-2$ and $\varepsilon_n=B_{\lceil n/2\rceil+1}^{-1}$,
the estimates proved above are
\begin{equation}\label{eq:intro-two-bounds}
\begin{aligned}
\text{closed surface:}\qquad
 a&\ge\pi \varepsilon_n,&
 \diam_S(T_i)&\ge\dclosed:=\arcosh(1+\varepsilon_n/2),\\
\text{entire plane:}\qquad
 D&>\dplane:= -\log\left(1-\frac{\varepsilon_n}{2\ell}\right).
\end{aligned}
\end{equation}

On a closed surface, the actual angle vector maximises the packing
objective, giving $a/\pi=\ell-\mu$ exactly. In the plane, the covering
optimum need not equal the actual angle sum; the available comparison
is \eqref{eq:quant-boundary-loss}. As $\varepsilon_n\to0$, the two
diameter estimates have orders $\sqrt{\varepsilon_n}$ and
$\varepsilon_n/[2(n-2)]$, respectively.

\begin{center}
\small\renewcommand{\arraystretch}{1.22}
\begin{tabular}{@{}cccc@{}}
\toprule
$n$ & closed area $c_n$ & closed diameter $\dclosed$ & plane diameter $\dplane$\\
\midrule
3 & $\pi/903$ & $0.0332764$ & $5.53863\!\times\!10^{-4}$\\
4 & $\pi/903$ & $0.0332764$ & $2.76893\!\times\!10^{-4}$\\
5 & $\pi/1631721$ & $7.82847\!\times\!10^{-4}$ & $1.02142\!\times\!10^{-7}$\\
6 & $\pi/1631721$ & $7.82847\!\times\!10^{-4}$ & $7.66062\!\times\!10^{-8}$\\
\bottomrule
\end{tabular}
\end{center}

The decimal values are rounded. Closed-surface diameter inequalities
are non-strict; the plane bounds are strict. None of these geometric
constants is asserted to be optimal. If a plane tiling has a finite
closed-surface quotient by embedded tiles, its lifted prototile
satisfies both estimates.

The fixed-side bounds allow reflex angles and do not assume
vertex-properness. Uniformity in the number of sides uses
vertex-properness: the constructed tilings in Zare's family have
vertices incident with only two tiles. Geodesic sides are also
essential for the plane theorem; Section~\ref{sec:curved-rectangles}
gives vertex-proper tilings by four-corner curved tiles with diameters
tending to zero.

\section{Diameter calculations for classical convex monotiles}\label{sec:classical-diameters}

We calculate the diameters of two small-area families of convex
monotiles. In each small-area limit below, the diameter has a positive
limit.

\subsection{Diameter from vertices}

For a compact convex geodesic polygon with vertices $p_1,\ldots,p_n$,
\begin{equation}\label{eq:vertex-diameter}
 \diam(P)=\max_{i,j}d(p_i,p_j).
\end{equation}
Indeed, distance to a fixed point is convex along a geodesic segment
in $\HH$. Triangulating $P$ shows that its maximum over $P$ is
attained at a vertex. Applying this first to one argument of
$d(p,q)$ and then to the other proves \eqref{eq:vertex-diameter}.
In the upper half-plane, write $p_i=\xi_i+\mathrm{i}\eta_i$, where $\eta_i>0$.
Then
\begin{equation}\label{eq:coordinate-diameter}
 \diam(P)=\arcosh\left(
 1+\max_{i,j}\frac{(\xi_i-\xi_j)^2+(\eta_i-\eta_j)^2}{2\eta_i\eta_j}
 \right).
\end{equation}
For a triangle, the diameter is simply its longest side.

\subsection{Triangular monotiles with arbitrarily small area}

Di Giosia et al.\ \cite[arXiv v1, Propositions 4.8--4.9]{digiosia}
give isosceles triangular monotiles of every area $0<a<\pi$.
For a positive integer $k$, their angle parameters are
\[
 \alpha=\frac{a}{2k-1},\qquad
 \beta=\frac\pi2-k\alpha,\qquad
 (\alpha,\beta,\beta).
\]
Choose $k$ so that the angles are positive. They satisfy
$2k\alpha+2\beta=\pi$, the tiling criterion in their Proposition 4.8,
and Gauss--Bonnet gives area $a$.

Let $L$ be either equal side, opposite $\beta$, and let $b$ be the
base, opposite $\alpha$. The angle form of the hyperbolic cosine law
\cite[arXiv v1, Proposition 3.2]{digiosia} gives
\begin{align}
 \cosh L
 &=\frac{\cos\beta+\cos\alpha\cos\beta}{\sin\alpha\sin\beta}
   =\cot(\alpha/2)\tan(k\alpha),\label{eq:triangle-leg}\\
 \cosh b
 &=\frac{\cos\alpha+\cos^2\beta}{\sin^2\beta}
   =\frac{\cos\alpha+\sin^2(k\alpha)}{\cos^2(k\alpha)}.
   \label{eq:triangle-base}
\end{align}
Thus the exact diameter is $\max\{L,b\}$.
For fixed $k$ and $a\downarrow0$, we have
\[
 \cosh L\longrightarrow2k,\qquad
 \cosh b\longrightarrow1,
 \qquad \diam(T_a)\longrightarrow\arcosh(2k)>0.
\]
The first limit follows from $\cot(\alpha/2)\sim2/\alpha$ and
$\tan(k\alpha)\sim k\alpha$.

In particular, take $k=1$ and $0<a<\pi/4$. Then $\alpha=a<\beta$,
so the equal sides are the longest. Formula~\eqref{eq:triangle-leg}
simplifies to
\begin{equation}\label{eq:small-triangle-diameter}
 \diam(T_a)=\arcosh(1+\sec a)
 \longrightarrow\arcosh2=\log(2+\sqrt3).
\end{equation}

The aperiodic triangular and rhombic family of Margulis--Mozes
described by Benoist~\cite[Proposition 4.8]{benoist} is different
from the small-area $n$-gons of \cite[Theorem 5]{margulis}.
Its triangle has angles $(\alpha,\beta,\beta)$ with
$5\alpha+8\beta=2\pi$ and $\alpha/\pi\notin\mathbb Q$,
and hence area $\pi/2+\alpha/4>\pi/2$. It has no small-area degeneration.
Theorem~\ref{thm:convex-diameter} nevertheless applies to all the
convex families, without requiring their construction parameters.

\subsection{A small-area geodesic horocyclic family}

The hyperbolic Penrose construction underlying another branch of
the Margulis--Mozes examples is described explicitly by
Oyono-Oyono and Petite \cite[Section 2.1]{oyonopetite}.
We introduce a width parameter and compute the resulting diameter.

Fix an integer $m\ge2$ and $w>0$. In the upper half-plane, join in
order the points
\[
 0+i,\ \frac wm+i,\ldots,w+i,\ w+mi,\ mi
\]
by geodesic segments, and call the polygon $P_{m,w}$.
It has $m+3$ genuine sides. The case $(m,w)=(2,1)$ is the
pentagon in \cite[Section 2.1]{oyonopetite}.

To check the tiling directly, let $\varphi$ be the periodic graph formed
by geodesic chords between successive points $jw/m+i$.
On the interval $[jw/m,(j+1)w/m]$,
\[
 \varphi(x)^2=1+(x-jw/m)((j+1)w/m-x).
\]
The lower boundary of $P_{m,w}$ is $y=\varphi(x)$ for $0\le x\le w$;
the upper boundary is $y=m\varphi(x/m)$, the single chord from $mi$ to
$w+mi$. The upper graph lies strictly above the lower one, since
on each such interval
\[
 m^2+x(w-x)>
 1+(x-jw/m)((j+1)w/m-x).
\]
The copies $m^j(P_{m,w}+kw)$, $j,k\in\ZZ$, tile the bands between
$m^j \varphi(x/m^j)$ and $m^{j+1}\varphi(x/m^{j+1})$.
Adjacent levels have identical boundary graphs. As $j\to-\infty$
and $j\to\infty$, these graphs tend uniformly to $0$ and to
infinity, respectively, because $1\le \varphi\le\sqrt{1+(w/2m)^2}$.
The copies therefore tile all of $\HH$ and are locally finite.

Put $\eta=\arctan(w/(2m))$. Conformality of the model gives
two lower endpoint angles $\pi/2-\eta$, two upper angles
$\pi/2+\eta$, and $m-1$ intermediate lower angles $\pi-2\eta$.
They all lie in $(0,\pi)$, so the simple geodesic polygon is convex.
Gauss--Bonnet gives
\begin{equation}\label{eq:penrose-area}
 \Area(P_{m,w})=2(m-1)\arctan\frac w{2m}.
\end{equation}

All lower vertices have height $1$ and the two upper vertices have
height $m$. By \eqref{eq:coordinate-diameter}, the largest distance
is either between the two lower endpoints or between opposite
lower and upper endpoints. Consequently
\begin{equation}\label{eq:penrose-diameter}
 \diam(P_{m,w})=
 \arcosh\max\left\{1+\frac{w^2}{2},\frac{m^2+1+w^2}{2m}\right\}.
\end{equation}
For fixed $m$ and $w\downarrow0$,
\[
 \Area(P_{m,w})\longrightarrow0,\qquad
 \diam(P_{m,w})\longrightarrow
 \arcosh\frac{m+m^{-1}}2=\log m>0.
\]
In particular, the pentagonal family has limiting diameter $\log2$.

\section{Tiles of small area and diameter}\label{sec:small}

Agol \cite{agol} described a binary tiling with arbitrarily small tile
area, referring to B\"or\"oczky's construction \cite{boroczky}.
Zare \cite{zare} explained how to obtain arbitrarily small diameter
by using $q$ arcs on one boundary and $q+1$ on the other; he had
already mentioned the small-diameter phenomenon in 2010
\cite{zare2010}. We give an explicit geodesic realisation of this idea.

\subsection{Binary tilings}

In the upper half-plane model, let
$B_\varepsilon=[0,\varepsilon]\times[1,2]$ for $\varepsilon>0$.
The copies $B_{j,k}=2^j(B_\varepsilon+k\varepsilon)$, $j,k\in\ZZ$,
are congruent in the hyperbolic metric. For each $j$, they fill the
strip $2^j\le y\le2^{j+1}$, and these strips tile $\HH$. The lower
boundary of each tile meets two tiles in the row below, while its
upper boundary meets one tile in the row above. Subdividing the
lower boundary at its midpoint gives the five combinatorial sides
of the binary tile: three horocyclic arcs and two geodesic segments.
This is the horocyclic form of the B\"or\"oczky tiling; see also
Dolbilin and Frettl\"oh \cite{dolbilinfrettloh}.

The area is
\begin{equation}\label{eq:binary-area}
 \Area(B_\varepsilon)
 =\int_0^\varepsilon\int_1^2\frac{dy\,dx}{y^2}
 =\frac\varepsilon2\longrightarrow0.
\end{equation}
Its diameter is at least the distance between $(0,1)$ and $(0,2)$,
namely $\log2$. Thus this family has arbitrarily small area, while
its diameter does not tend to zero.

\subsection{A geodesic realisation of Zare's idea}

\begin{theorem}\label{thm:small}
For each integer $q\ge2$, a compact simple geodesic $(2q+3)$-gon $T_q$ admits a
locally finite tiling of $\HH$ by congruent copies, with
\[
 \Area(T_q)=2\arctan\frac1{2q^2}<\frac1{q^2},
 \qquad \diam_{\HH}(T_q)<\frac3q.
\]
The tiling is not isohedral and has no compact quotient.
\end{theorem}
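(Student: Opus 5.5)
The plan is to imitate the horocyclic-chord family $P_{m,w}$ of Section~\ref{sec:classical-diameters}, with the integer scaling $m$ replaced by the non-integer ratio $\lambda=(q+1)/q$. Work in the upper half-plane and put $s=1/q^2$ and $w=(q+1)s=(q+1)/q^2$. Let $\varphi_s$ be the periodic graph of geodesic chords joining the successive points $js+i$, $j\in\ZZ$. The tile $T_q$ has:
\begin{itemize}
\item lower boundary $y=\varphi_s(x)$ on $[0,w]$, consisting of $q+1$ chords;
\item upper boundary $y=\lambda\varphi_s(x/\lambda)$ on $[0,w]$. Since $\lambda s=w/q$, this consists of $q$ chords of Euclidean width $w/q$ at height $\lambda$.
\end{itemize}
These are joined by the two vertical segments $x=0$ and $x=w$, from height $1$ to height $\lambda$. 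This gives $(q+1)+q+2=2q+3$ genuine sides.

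\emph{Tiling and simplicity.} First check, as for $P_{m,w}$, that the upper graph lies strictly above the lower one. This holds because $\varphi_s\le\sqrt{1+s^2/4}<\lambda$ for $q\ge2$, and it makes $T_q$ a simple geodesic polygon. The tiling consists of the copies $\lambda^j(T_q+kw)$ for $j,k\in\ZZ$. Row $j$ fills the band between $\lambda^j\varphi_s(x/\lambda^j)$ and $\lambda^{j+1}\varphi_s(x/\lambda^{j+1})$, because horizontal translation by $\lambda^jw$ is a period of both graphs: $\lambda^j w$ is a multiple of $\lambda^j s$ and of $\lambda^{j+1}s=\lambda^jw/q$. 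Adjacent rows share identical boundary graphs. Since $1\le\varphi_s\le\sqrt{1+s^2/4}$, these graphs tend uniformly to $0$ and to $\infty$ as $j\to\mp\infty$, so the copies cover $\HH$. Local finiteness follows from Section~\ref{sec:local-finite}.

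\emph{Area and diameter.} Set $\eta=\arctan(s/2)$. This is the angle each chord makes with the horizontal, and it is the same at both heights because the upper chords are the lower ones scaled by $\lambda$. The angles of $T_q$ are:
\begin{itemize}
\item two lower corners $\pi/2-\eta$ and $q$ lower interior vertices $\pi-2\eta$;
\item two upper corners $\pi/2+\eta$ and $q-1$ upper interior vertices $\pi+2\eta$, which are reflex.
\end{itemize}
The angle sum is $(2q+1)\pi-2\eta$, so Gauss--Bonnet gives
\[
 \Area(T_q)=2\eta=2\arctan\frac1{2q^2}<\frac1{q^2}.
\]
For the diameter, join any two points of $T_q$ by a horizontal Euclidean segment at height at least $1$ followed by a vertical segment. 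The horizontal part has hyperbolic length at most $w$. All heights lie in $[1,\lambda\sqrt{1+s^2/4}]$, so the vertical part has length at most $\log\lambda+\tfrac12\log(1+s^2/4)$. Using $\log(1+t)\le t$,
\[
 \diam(T_q)\le\frac{q+1}{q^2}+\frac1q+\frac1{8q^4}<\frac3q
 \qquad (q\ge2).
\]

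\emph{Symmetry.} I expect this to be the main obstacle. The first step is to show that every isometry preserving the tiling fixes $\infty$. The nested row-boundary curves are the unions of shared edges separating consecutive rows; they are characterised combinatorially, since each tile meets $q+1$ tiles of one neighbouring row and $q$ of the other. All of them are asymptotic only at $\infty$, and a symmetry must permute them. The symmetry group $\Gamma$ therefore consists of maps $z\mapsto az+b$ (possibly composed with $x\mapsto-x$), and it is discrete by local finiteness.

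Suppose the tiling were isohedral. Then $\Gamma$ would contain an element carrying row $0$ to row $1$, so some $a=\lambda$ occurs. Conjugating a horizontal translation of $\Gamma$ by this element produces translations whose lengths have irrational-type ratio $\lambda^{\pm1}$ with non-integer $\lambda$. Combining these yields arbitrarily short translations, which contradicts discreteness.

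Finally, a compact quotient would require a cocompact discrete group fixing $\infty$. This is impossible: such a group contains no parabolic elements, and distinct hyperbolic elements with a common fixed point generate a non-discrete group. Equivalently, a cocompact group fixing a boundary point would be elementary, and elementary groups are not cocompact.
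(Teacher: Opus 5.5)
Your construction, tiling argument and diameter estimate coincide with the paper's; your $\lambda,s,w$ are its $r,\delta,L$. Computing the area from the angle list by Gauss--Bonnet, instead of the paper's integral $(q+1)I-qI$, is equally valid.

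The trouble is in the symmetry paragraph, at exactly the step you flagged. The claim that ``each tile meets $q+1$ tiles of one neighbouring row and $q$ of the other'' is false: $q+1$ and $q$ count \emph{edges}, not tiles. Along $C_j$, the row-$j$ tiles occupy blocks of $q+1$ consecutive arcs, and the row-$(j-1)$ tiles occupy blocks of $q$. So, along arcs, a tile borders exactly two tiles of the row below and one or two of the row above. Rows have therefore not been characterised, and ``every symmetry fixes $\infty$'' is unproved as written.

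The paper settles this metrically. The arcs have length $2\operatorname{arsinh}(s/2)<s$, while the vertical sides have length $\log\lambda>1/(q+1)\ge s$. Hence every symmetry permutes the vertical sides. These lie on many distinct vertical geodesics whose only common ideal endpoint is $\infty$, so $\infty$ is fixed. Alternatively, each tile's lower chain of $q+1$ arcs between its two corners of angle $\pi/2-\eta$ is intrinsic to the tile. So $\bigcup_j C_j$ is invariant under symmetries, and its components are permuted.

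Two later sentences also need repair.
\begin{itemize}
\item In the non-isohedrality step, you must first extract a nontrivial translation from transitivity on row $0$; for instance, two distinct maps $z\mapsto-\bar z+b$ compose to one. Also, $\lambda=(q+1)/q$ is rational, not of ``irrational type''. Translations by $t$ and $\lambda t$ generate only $(t/q)\ZZ$, which is discrete. You need repeated conjugation to obtain $q^{-k}t$ for every $k$.
\item In the last paragraph, hyperbolic elements sharing \emph{both} fixed points, e.g.\ $z\mapsto2z$ and $z\mapsto4z$, generate a discrete group. Only your ``equivalently'' version is correct: a discrete group fixing an ideal point is elementary, hence not cocompact after passing to the orientation-preserving subgroup.
\end{itemize}

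Once repaired, your route differs from the paper's. The paper uses the vertical-side endpoint heights to force $a\in\lambda^{\ZZ}$. It then uses the vertical-side grids to force $b\in\bigcap_j\lambda^jw\ZZ=\{0\}$, which gives the whole group $G$ explicitly. Non-transitivity on tiles and infinite covolume are read off from $G$, with no appeal to discreteness. Your argument avoids computing $G$ and needs only that every symmetry fixes $\infty$. In exchange, it relies on discreteness of the symmetry group and on the structure of elementary groups.
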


\begin{proof}
Use the upper half-plane metric $ds^2=(dx^2+dy^2)/y^2$. Put
\begin{equation}\label{eq:parameters}
 r=\frac{q+1}{q},\qquad \delta=\frac1{q^2},\qquad
 L=(q+1)\delta,\qquad H=\sqrt{1+\frac{\delta^2}{4}}.
\end{equation}
Define the continuous $\delta$-periodic function
\begin{equation}\label{eq:scallops}
 \varphi(x)=\sqrt{H^2-\left(x-\left(k+\tfrac12\right)\delta\right)^2},
 \qquad k\delta\le x\le(k+1)\delta,\quad k\in\ZZ.
\end{equation}
Each graph segment is the geodesic joining $(k\delta,1)$ and
$((k+1)\delta,1)$: it lies on a circle centred on the real axis.
Since $1\le \varphi\le H<r$, the region
\begin{equation}\label{eq:tile}
 T_q=\{(x,y):0\le x\le L,\quad \varphi(x)\le y\le r \varphi(x/r)\}
\end{equation}
is a simple geodesic polygon. It has $q+1$ lower arcs, $q$ upper arcs
and two vertical sides. Its $q-1$ internal upper corners are reflex:
with $\beta=\arctan(\delta/2)$, their interior angles are exactly
\[
 \pi+2\beta=\pi+2\arctan\frac1{2q^2}>\pi.
\]
The internal lower angles are $\pi-2\beta$, and the four end angles
are $\pi/2\pm\beta$. Thus every $T_q$, $q\ge2$, is nonconvex,
as expected from the convex special case of
Theorem~\ref{thm:convex-diameter}.

For $j,m\in\ZZ$, let
\begin{equation}\label{eq:copies}
 T_{j,m}=r^j(T_q+mL),\qquad C_j(x)=r^j \varphi(x/r^j).
\end{equation}
The copies are congruent because translation and dilation are
hyperbolic isometries. Since $L/\delta=q+1$ and $L/(r\delta)=q$ are
integers, periodicity gives
\begin{equation}\label{eq:row}
 T_{j,m}=\{(x,y):mLr^j\le x\le(m+1)Lr^j,
                       \ C_j(x)\le y\le C_{j+1}(x)\}.
\end{equation}
Thus row $j$ fills the strip between $C_j$ and $C_{j+1}$. Moreover,
\[
 r^j\le C_j(x)\le Hr^j<r^{j+1}\le C_{j+1}(x).
\]
The graphs tend uniformly to zero as $j\to-\infty$ and to infinity
as $j\to+\infty$. Hence the strips cover $\HH$ with disjoint interiors.
A compact set meets finitely many rows and, because its horizontal
coordinate is bounded, finitely many tiles in each row. The tiling
is locally finite.

This tiling is not vertex-proper. Indeed,
$(\delta,1)$ is a genuine corner on $C_0$, but is not an endpoint
of a vertical side in either adjacent row. The vertical sides in
row $0$ occur at multiples of $(q+1)\delta$, and those in row $-1$
at multiples of $q\delta$. Since $q\ge2$, exactly two tiles meet
at $(\delta,1)$; their angles are $\pi-2\beta$ and $\pi+2\beta$.

Let $I=\int_0^\delta dx/\varphi(x)=2\arctan(\delta/2)$. By periodicity and
the substitution $u=x/r$,
\begin{align*}
 \Area(T_q)
 &=\int_0^L\left(\frac1{\varphi(x)}-\frac1{r \varphi(x/r)}\right)dx\\
 &=(q+1)I-qI
  =2\arctan\frac1{2q^2}<\frac1{q^2}.
\end{align*}
Also $T_q\subset[0,L]\times[1,rH]$. From the lower of two points,
move vertically to the height of the other, then horizontally.
This path bounds their
ambient distance, whether or not it lies in the tile. Since
$\log r<1/q$ and $\log H\le1/(8q^4)$,
\begin{equation}\label{eq:smalldiam}
 \diam_{\HH}(T_q)\le L+\log(rH)
 <\frac2q+\frac1{q^2}+\frac1{8q^4}<\frac3q.
\end{equation}

Each arc edge has length $2\operatorname{arsinh}(\delta/2)<\delta$,
whereas the two vertical sides have length
$\log r>1/(q+1)\ge\delta$. Their supporting geodesics meet at the
ideal point $\infty$. Thus any symmetry of the tiling fixes $\infty$
and has the form
\[
 x+iy\longmapsto\pm ax+b+iay,\qquad a>0.
\]
The vertical sides have endpoint heights $r^j,r^{j+1}$, which force
$a=r^k$ for some $k\in\ZZ$. Dilation by $r$ preserves the tiling.
Composing with $z\mapsto r^{-k}z$, we may assume $a=1$. Preservation
of the vertical-side grid in every row then requires
\[
 b\in\bigcap_{j\in\ZZ}Lr^j\ZZ=\{0\}.
\]
Since $\varphi(-x)=\varphi(x)$, reflection in the imaginary axis is also a
symmetry. The full group is therefore
\begin{equation}\label{eq:group}
 G=\{z\mapsto r^kz,\quad z\mapsto-r^k\overline z:k\in\ZZ\}.
\end{equation}
It sends $(j,m)$ to $(j+k,m)$ or $(j+k,-m-1)$, so it is not
transitive on tiles. In polar coordinates, the quotient by its
orientation-preserving subgroup has area
\[
 \int_1^r\frac{d\rho}{\rho}
 \int_0^\pi\frac{d\theta}{\sin^2\theta}=\infty.
\]
The full group also has infinite covolume. Hence no subgroup of $G$
is a cocompact surface group, and this tiling has no compact quotient.
\end{proof}

The maps $a(z)=z+\delta$ and $b(z)=rz$ satisfy
$b^{-1}a^{q+1}b=a^q$, the Baumslag--Solitar relation in Zare's
description. The translation $a$ is not a symmetry of the tiling.

\section{Extensions}\label{sec:extensions}

We consider regular curved sides on closed surfaces, the additional
consequences of tile-transitive symmetry, and small curved tiles in
the entire plane.

\subsection{Curvilinear tiles on closed hyperbolic surfaces}\label{sec:curved}

The fixed-corner bounds extend to curved sides because the signed
boundary-curvature integral of each tile in a finite congruent tiling
vanishes.

\subsubsection{Regularity and boundary curvature}

Let $T_1,\ldots,T_f$ be congruent embedded closed discs tiling a closed
hyperbolic surface $S$, with $\chi=\chi(S)<0$. Assume that every boundary consists
of finitely many regular $C^2$ arcs and that the boundaries admit a
finite common subdivision. Each open subdivision edge must belong to
exactly two tile boundaries. Congruence refers to the lifted shapes
in $\HH$ and may reverse orientation.

A genuine corner has angle
$\ang{i}\in(0,2\pi)\setminus\{\pi\}$. The number $n$ counts these
corners, excluding auxiliary smooth subdivision points. Reflex angles
and corners lying on the smooth part of another boundary are allowed.
At such a point the smooth incidence contributes angle $\pi$,
so the same full- and half-vertex conventions apply.

For a tile $P$, define
\[
 J(P)=\int_{\partial P_{\rm sm}}k_g\,ds,
 \qquad k_g=\langle\nabla_\tau\tau,\nu_{\rm in}\rangle,
\]
where $\tau$ is a unit tangent and $\nu_{\rm in}$ is the inward unit
conormal in $S$. Thus $k_g$ is signed geodesic curvature; its sign
depends on the side of the arc occupied by the tile. This definition
does not require a global orientation of $S$.

\begin{lemma}[Boundary-curvature cancellation]\label{lem:curved-cancel}
Every tile in this finite congruent tiling satisfies $J(T_i)=0$.
If its area is $a$, then
\begin{equation}\label{eq:curved-GB}
 a=(n-2)\pi-\sum_{i=1}^n\ang{i}.
\end{equation}
In particular, $n\ge3$.
\end{lemma}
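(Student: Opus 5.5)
The plan is to show that the signed boundary-curvature integrals cancel in pairs across the tiling, so the total $\sum_p J(T_p)$ vanishes. Congruence then makes every $J(T_p)$ equal, so each is zero. Gauss--Bonnet then gives \eqref{eq:curved-GB}.

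\emph{Step 1: Gauss--Bonnet for one tile.} Each $T_p$ is an embedded closed disc with piecewise $C^2$ regular boundary. Gauss--Bonnet on the disc (curvature $-1$, Euler characteristic $1$), with exterior turning angles $\pi-\ang{i}$ at genuine corners, gives
\[
 -a+J(T_p)+\sum_{i=1}^n(\pi-\ang{i})=2\pi.
\]
Auxiliary smooth subdivision points contribute zero turning. This holds with reflex angles as well, since the turning angle $\pi-\ang{i}\in(-\pi,\pi)$ is the signed one. Hence
\[
 a=(n-2)\pi-\sum_i\ang{i}+J(T_p).
\]

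\emph{Step 2: $J$ is a congruence invariant.} The quantity $J$ is defined locally: $k_g$ uses the inward conormal, which is determined by the side occupied by the tile. So $J$ is computed on the lifted shape in $\HH$ and is preserved by all isometries, including orientation-reversing ones; reversing the orientation flips both $\tau$ and the sign conventions consistently. Therefore all $J(T_p)$ equal a common value $J$.

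\emph{Step 3: pairwise cancellation.} Use the finite common subdivision. Each open subdivision edge $\gamma$ lies in exactly two tile boundaries, $T_p$ and $T_{p'}$, which lie on opposite sides of $\gamma$. Their inward conormals are negatives of each other. For a fixed tangent $\tau$, $\langle\nabla_\tau\tau,\nu\rangle$ changes sign with $\nu$, and it is unchanged under $\tau\mapsto-\tau$, because $\nabla_\tau\tau$ is quadratic in $\tau$. Hence the contributions of $\gamma$ to $J(T_p)$ and $J(T_{p'})$ cancel. All of $\partial T_p$ outside finitely many points is covered by such edges, so summing over edges gives $fJ=\sum_pJ(T_p)=0$, hence $J=0$. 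This yields \eqref{eq:curved-GB}.

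\emph{Step 4: $n\ge3$.} Since $a>0$ and every $\ang{i}>0$, \eqref{eq:curved-GB} gives $(n-2)\pi>0$, so $n\ge3$.

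The main care point is Step 3. The argument needs the two tiles along an edge to lie on opposite sides, which holds because the tiles have disjoint interiors and each edge belongs to exactly two boundaries. It also needs the whole smooth boundary to be accounted for by the subdivision. Both are guaranteed by the standing assumptions. Nonorientability is harmless, because only local sides are used.
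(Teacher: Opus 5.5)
Your proposal is correct and follows the paper's proof essentially step for step. You show $J$ is a congruence invariant because the inward conormal is intrinsic to the tile and $\nabla_\tau\tau$ is unchanged under $\tau\mapsto-\tau$, then cancel the signed curvatures along each shared edge of the finite common subdivision to get $fJ=0$, and finish with Gauss--Bonnet and $a>0$ for $n\ge3$. Stating the Gauss--Bonnet identity first rather than last, as you do, is an immaterial reordering.
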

\begin{proof}
Congruence preserves $J(P)$, including under orientation reversal:
an isometry preserves the inner product in the definition of $k_g$,
and replacing $\tau$ by $-\tau$ leaves $\nabla_\tau\tau$ unchanged.
Hence all tiles have a common value $J$.

Along any shared open boundary edge, the two tiles have the same
tangent acceleration and opposite inward conormals. Their signed
curvatures therefore add to zero. Integrating along all edges in the
finite common subdivision gives
\[
 \sum_{i=1}^f J(T_i)=0.
\]
Since the left side is $fJ$, we have $J=0$.

Gauss--Bonnet for a disc with corners in curvature $-1$ reads
\[
 -a+J(P)+\sum_{i=1}^n(\pi-\ang{i})=2\pi.
\]
Substituting $J(P)=0$ proves \eqref{eq:curved-GB}.
If $n\le2$, its right side is nonpositive, contradicting $a>0$.
\end{proof}

\Needspace{15\baselineskip}
\subsubsection{Area, diameter, and tile-count bounds}

\begin{theorem}\label{thm:curved}
For the regular curvilinear tilings above, all fixed-corner area,
diameter, and tile-count bounds in Section~\ref{sec:explicit} hold:
\[
 a\ge\pi \varepsilon_n,\qquad
 \diam_S(T_i)\ge\arcosh(1+\varepsilon_n/2),\qquad
 f\le-2\chi B_{\lceil n/2\rceil+1}.
\]
If the tiling is vertex-proper, then the uniform bounds
\eqref{eq:valence-explicit} and $f\le-3263442\chi(S)$ also hold.
For curved sides, Definition~\ref{def:vertex-proper} uses the genuine
corners specified above. Equivalently, full vertices have at least
three genuine corner incidences and half vertices at least two.
Such a tiling is automatically vertex-proper when every genuine
angle is less than $\pi$.
\end{theorem}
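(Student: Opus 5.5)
The plan is to rerun the closed-surface argument of Section~\ref{sec:angle-system} with Lemma~\ref{lem:curved-cancel} in place of the geodesic Gauss--Bonnet formula. Put $x_i=\ang{i}/\pi$. By \eqref{eq:curved-GB},
\[
 \sum_i x_i=n-2-\frac a\pi,
\]
which is exactly \eqref{eq:GB}. At every vertex of the finite common subdivision that is a genuine corner of some tile, the sectors partition the tangent circle. Genuine corners contribute their angles $\ang{i}$. A smooth incidence of an arc through the point contributes $\pi$, since a regular $C^1$ arc has a tangent line there. At most one smooth incidence occurs, because a genuine corner with positive angle is present. This gives \eqref{eq:balance} with $h(w)\in\{0,1\}$. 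Auxiliary smooth subdivision points carry no genuine corners; they are simply omitted from the vertex set, since they contribute to no labelled count. Each tile carries each labelled corner once, so \eqref{eq:label-count} holds.

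From these three identities the polytope $Q$ of \eqref{eq:angle-polytope} is defined verbatim. It is bounded in $[0,2]^n$, and the summation \eqref{eq:angle-maximum} shows that $x$ is a maximiser with $\mu=n-2-a/\pi$. Nothing in that computation used geodesicity. Lemma~\ref{lem:gap} with $k=n$ and $\ell=n-2$ then gives $a\ge\pi\varepsilon_n$, exactly as in the proof of Theorem~\ref{thm:explicit}.

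The diameter bound follows from the ball estimate $a\le2\pi(\cosh D_i^S-1)$. That estimate depends only on $T_i\subset B_S(p,D_i^S)$ and on the fact that the exponential map covers the surface ball, so curved sides play no role. The tile count follows from $fa=\Area(S)=-2\pi\chi$.

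For the vertex-proper uniform bounds, the only genuinely new step is Proposition~\ref{prop:vertices} for curved sides when $n\ge7$. Its proof is purely combinatorial and uses only the common finite subdivision. Each open edge lies on two tile boundaries, so $2e$ equals the total number of subdivided sides. I would run that count on the subdivision with auxiliary smooth points removed from the graph, merging the edges across them. At such a point both incident tiles are smooth, so merging is consistent and preserves $v-e+f=\chi$. The tile boundaries then have $nf+v_{\mathrm{half}}$ subdivided sides, since a tile boundary is a closed curve with $n\ge3$ genuine corners. Vertex-properness gives \eqref{eq:full-half-corners}, and Euler yields $a\ge\pi(n-6)/3\ge\pi/3$. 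Combining this with the fixed-$n$ bound for $3\le n\le6$ gives \eqref{eq:valence-explicit} and $f\le-3263442\chi$, as in Corollary~\ref{cor:unrestricted}.

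The main obstacle is justifying the merging step and the edge structure. A merged edge could be a closed smooth loop, or two tiles could share an edge with no vertices. Both are excluded because every tile boundary has $n\ge3$ genuine corners, and every edge of a tile boundary runs between two of them or between corners and half vertices.

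Finally, if every genuine angle is less than $\pi$, two sectors cannot fill $2\pi$ at a full vertex. At a half vertex the single genuine corner would need angle $\pi$, which is excluded. Hence the tiling is vertex-proper, as claimed.
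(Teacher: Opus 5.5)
Your proposal is correct and follows the paper's proof essentially step for step. Lemma~\ref{lem:curved-cancel} replaces geodesic Gauss--Bonnet; the local angle equations and label counts feed the same polytope and Lemma~\ref{lem:gap}; the diameter and tile-count bounds come from the ball estimate and $fa=-2\pi\chi$; and the uniform bound comes from suppressing the smooth degree-two subdivision points and rerunning the counts of Proposition~\ref{prop:vertices} (your observation that a lone genuine corner at a half vertex would need angle $\pi$ is a harmless variant of the paper's closing remark).
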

\begin{proof}
At a genuine tiling vertex, the tangent sectors of incident tiles
partition the tangent plane. At most one incident tile can be smooth
there, since each such incidence contributes $\pi$ and a positive
corner angle is present. Thus the local angle equations remain
\[
 \sum_i m_i(w)\ang{i}+h(w)\pi=2\pi,
 \qquad h(w)\in\{0,1\}.
\]
Each labelled corner occurs once per tile, so
$\sum_w m_i(w)=f$. Lemma~\ref{lem:curved-cancel} supplies the same
area--angle formula used in \eqref{eq:GB}. The polytope construction
and integer-gap proof therefore apply without change.

The diameter and tile-count estimates use only area and the ambient
hyperbolic metric. For the uniform bound, suppress smooth auxiliary
vertices of degree two in the common subdivision. With $v_{\mathrm{full}}$ full
vertices and $v_{\mathrm{half}}$ half vertices, the counts are again
\[
 nf\ge3v_{\mathrm{full}}+2v_{\mathrm{half}},\qquad 2e=nf+v_{\mathrm{half}}.
\]
The proof of Proposition~\ref{prop:vertices} gives
$a\ge\pi(n-6+v_{\mathrm{half}}/f)/3$, and Corollary~\ref{cor:unrestricted} applies.
Finally, angles less than $\pi$ require at least three corner
incidences to sum to $2\pi$ at a full vertex, and at least two to
sum to $\pi$ at a half vertex.
\end{proof}

The identity $J(P)=0$ allows positive and negative curvature on
different boundary arcs, so the arcs need not be geodesic. For a
geodesically convex tile, $k_g\ge0$ on each smooth arc; continuity and
$J(P)=0$ then force every arc to be geodesic. Corner angles less than
$\pi$ alone do not imply geodesic convexity of a curvilinear tile.

\subsection{Symmetry and compact quotients}\label{sec:transitive}

Siegel's covolume bounds~\cite{siegel}, in curvature $-1$, are
\begin{equation}\label{eq:siegel}
\begin{aligned}
 \Area(\Gamma\backslash\HH)&\ge\pi/21,
 &&\Gamma\le\operatorname{Isom}^{+}(\HH),\\
 \Area(\Gamma\backslash\HH)&\ge\pi/42,
 &&\Gamma\le\operatorname{Isom}(\HH),
\end{aligned}
\end{equation}
for cofinite discrete groups. Equality is attained by the
orientation-preserving $(2,3,7)$ triangle group and the reflection
group, respectively; see also Beardon~\cite{beardon} and
Macbeath~\cite{macbeath}.

A group $G$ preserving a locally finite tiling by compact geodesic
polygons is discrete. Indeed, if $g_j\to1$, the interiors of a fixed
tile $P$ and $g_jP$ meet for large $j$, so $g_jP=P$. The stabilizer
$G_P$ is finite, since it permutes the corners and three noncollinear
corners determine an isometry. Thus $g_j=1$ eventually.
If $G$ is transitive on tiles, a fundamental region for $G_P$ in $P$
is a fundamental region for $G$ up to a null set. Consequently
\begin{equation}\label{eq:isohedral}
 \Area(P)=|G_P|\,\Area(G\backslash\HH)\ge\pi/42.
\end{equation}
The bound is $\pi/21$ if an orientation-preserving group is transitive.

Similarly, suppose a group $G$ of global isometries of a closed
hyperbolic surface $S$ is transitive on its tiles, allowing curved
sides. The quotient orbifold has area $-2\pi\chi/|G|$, so
$|G|\le-84\chi$. Since $f\le|G|$ and $fa=-2\pi\chi$,
\[
 a\ge\pi/42,\qquad \diam_S(T_i)\ge\arcosh(1+1/84).
\]
For orientable $S$ and orientation-preserving $G$, these improve to
$|G|\le-42\chi$, $a\ge\pi/21$ and
$\diam_S(T_i)\ge\arcosh(1+1/42)$. When $\chi=2-2g$, the group
bounds are $168(g-1)$ and $84(g-1)$, respectively, the latter being
Hurwitz's bound. These conclusions require transitivity by global
isometries; congruence of lifted prototiles alone is insufficient.
The bounds of Section~\ref{sec:gap} instead use the exact corner
count and require no such symmetry.

\subsection{Small curved rectangles in the hyperbolic plane}\label{sec:curved-rectangles}
In the upper half-plane, fix $\varepsilon>0$, put $r=1+\varepsilon$, and let
\[
 R_\varepsilon=[0,\varepsilon]\times[1,r],\qquad
 R_{j,k}=r^j(R_\varepsilon+k\varepsilon),\quad j,k\in\ZZ.
\]
For each $j$ these congruent tiles fill the strip $r^j\le y\le r^{j+1}$;
the strips cover $\HH$ and the tiling is locally finite. The horizontal
sides are horocyclic, the vertical sides geodesic, and all four angles
are right angles. Direct integration and a vertical-horizontal path give
\[
 \Area(R_\varepsilon)=\varepsilon(1-r^{-1})
 =\frac{\varepsilon^2}{1+\varepsilon},\qquad
 \diam(R_\varepsilon)\le\varepsilon+\log(1+\varepsilon).
\]
Both tend to zero. Nevertheless, this tiling is vertex-proper.
At a vertex on $y=r^j$, the grids
of vertical sides in the two adjacent rows are
$\varepsilon r^j\ZZ$ and $\varepsilon r^{j-1}\ZZ$.
A point in exactly one grid belongs to two tiles on that side and
one on the other, hence to three distinct tiles. A point in both
grids belongs to four. These exhaust the genuine tiling vertices.
Consequently vertex-properness alone does not extend the plane
diameter theorem to arbitrary curved sides. The closed-surface
extension remains valid because of boundary-curvature cancellation.

Gauss--Bonnet gives
$J(R_\varepsilon)=\Area(R_\varepsilon)>0$, since the four exterior
angles sum to $2\pi$. In fact, the tiling admits no compact quotient
by a discrete group of symmetries. Otherwise pass to a torsion-free
orientation-preserving subgroup of finite index~\cite{beardon},
obtaining a closed hyperbolic surface. Compactness and local finiteness
give finitely many tile orbits, say $m$. Each tile has trivial
stabilizer: its stabilizer in the isometry group is compact, and its
intersection with a discrete torsion-free group is trivial. Choose
one tile from each orbit and sum its full smooth-boundary curvature
integral. Count boundary incidences with multiplicity, including
edges whose two sides project to the same tile. Every shared edge
then occurs twice with opposite inward conormals, so the sum is
zero. But all $m$ integrals equal $J(R_\varepsilon)>0$, a
contradiction. This argument does not require the projected closed
tiles to be embedded discs.

For geodesic sides, the construction in Section~\ref{sec:small}
gives arbitrarily small area and diameter with increasing corner count.
That construction is adapted from Zare~\cite{zare} and also has no
compact quotient.

This distinction also occurs in the classical construction of
B\"or\"oczky. For the edge-to-edge B\"or\"oczky tilings defined in their
Section~2, Dolbilin and Frettl\"oh~\cite[Theorem~4.4]{dolbilinfrettloh}
prove that the symmetry group is not cocompact. Such a tiling has no
closed-surface quotient, since its deck group would already act
cocompactly. Some members nevertheless have an infinite cyclic
symmetry group.

\Needspace{10\baselineskip}
\section*{Acknowledgements}

The authors thank Zhejiang Normal University for funding ZJNU Tiling
Studio, and the over thirty undergraduate and graduate students
whose sustained participation has supported the group's work on
tilings of the sphere, the plane, tori, hyperbolic surfaces and the
integers. The corresponding author thanks Terry Tao for citing the
group's classification work in his MathOverflow answer on small
congruent spherical cells~\cite{Tao}, which prompted the present
hyperbolic questions.

SageMath, Maple, and language models including DeepSeek, Doubao and
ChatGPT 6 were used for symbolic and numerical computations,
literature searches and experimental checks. The authors take full
responsibility for the mathematical content of this work.

\end{document}